\colorlet{LightBlack}{black!87!}
\definecolor{deepgreen}{cmyk}{1,0,1,0.5}
\newcommand{\Del}[1]{}
\numberwithin{equation}{section}
\newtheorem{theorem}{Theorem}[section]
\newtheorem{lem}[theorem]{Lemma}
\newtheorem{remark}[theorem]{Remark}
\newcommand{\R}{{\mathbb R}}
\providecommand{\MR}{\relax\ifhmode\unskip\space\fi MR }
\providecommand{\href}[2]{#2}
\begin{document}
	
%
%
\newcommand{\sech}{\operatorname{sech}}

\newcommand{\be}{\begin{equation}}
\newcommand{\ee}{\end{equation}}
\newcommand{\bp}{\begin{pmatrix}}
\newcommand{\ep}{\end{pmatrix}}
\newcommand{\ba}{\begin{aligned}}
\newcommand{\ea}{\end{aligned}}
\newcommand{\q}{\quad}
\newcommand{\qq}{\qquad}
\newcommand{\bee}{\begin{eqnarray*}}
\newcommand{\eee}{\end{eqnarray*}}
\newcommand{\ben}{\begin{enumerate}}
\newcommand{\een}{\end{enumerate}}
\newcommand{\nonu}{\nonumber}

\title[Supercritical gKdV long time dynamics]{On the asymptotic dynamics for the $L^2$-Supercritical gKdV equation}

\author[Freire]{Ricardo Freire}
\address{Departamento de Ingenier\'{\i}a Matem\'atica, Universidad de Chile, Casilla
170 Correo 3, Santiago, Chile.}
	\email{rfreire@dim.uchile.cl}
	\thanks{R.F. was partially funded by Chilean research grants FONDECYT 3230256, MathAmSud WAFFLE and ANID Exploraci\'on 13220060.}

\author[Mu\~noz]{Claudio Mu\~noz}  
	\address{Departamento de Ingenier\'{\i}a Matem\'atica and Centro
de Modelamiento Matem\'atico (UMI 2807 CNRS), Universidad de Chile, Casilla
170 Correo 3, Santiago, Chile.}
	\email{cmunoz@dim.uchile.cl}
	\thanks{C.M. was partially funded by Chilean research grants FONDECYT 1231250, MathAmSud WAFFLE, ANID Exploraci\'on 13220060 and Basal CMM FB210005.}


\begin{abstract}
We study the $L^2$-supercritical generalized Korteweg-de Vries equation (gKdV) with nonlinearities $p>5$. While local well-posedness in $H^1$ is classical, the long–time dynamics in the supercritical regime remains largely unexplored beyond small data global solutions, the construction of multi-solitons for any power and self-similar blow-up near the critical power $p=5$. We develop a unified description of the non-solitonic region for arbitrary $H^1$ solutions, both global and blowing up. Our analysis shows that the asymptotic $L^2$ and $L^p$ dynamics in this region is completely determined by the growth rate of the $L^2$ norm of the gradient (or, equivalently, the critical $H^{s_p}$ norm). In particular, we prove sharp far-field decay on both half-lines and establish normalized local vanishing along sequences of times, with improved estimates in the case of even-power nonlinearities. A key ingredient is a new virial method that compensates for the possible unboundedness of the $H^1$ norm by exploiting the conservation of mass and a careful localization of the nonlinear flux. This yields quantitative versions of decay phenomena previously known only in subcritical settings, and it applies without any smallness or proximity-to-soliton assumptions.

\end{abstract}

\maketitle
\numberwithin{equation}{section}

\bigskip

\section{Introduction}

Let $p\geq 2$ be an integer. Consider the generalized (focusing) Korteweg-de Vries (gKdV) equation in one dimension:
\be\label{4gKdV}
\ba
& \partial_t u + \partial_x(\partial_x^2 u + u^{p})=0, \quad u=u(t,x)\in\R, \quad (t,x)\in\R^2,\\
& u(0,x)= u_0(x) \hbox{ given}.
\ea
\ee
The cases $p=2$ and $p=3$ have been studied in great detail since they are integrable and represent shallow water waves in the long wave regime, see e.g. Linares and Ponce \cite{LP} for a detailed account on the mathematical theory of gKdV models. When $p\leq 4$, this equation is globally well-posed for initial data in $H^1$, and locally well-posed if $p\geq 5$, see the works by Kato and Kenig-Ponce-Vega  \cite{Ka,KPV1}. For the purposes of this paper, we shall only need this regularity, although it is well-known that the initial value problem associated to \eqref{4gKdV} is locally (and globally) well-posed for initial data less regular. In particular, \eqref{4gKdV} is locally well-posed at regularity in $H^s$, $s> s_p$, $p\geq 5$, 
\be\label{s_p}
s_p=\frac12-\frac{2}{p-1},
\ee
 and where $s_p$ represents the critical regularity in the space $H^{s}$  \cite{KPV1}. The global well-posedness in this case is still matter of research, since \eqref{4gKdV} in the case $p>5$ represents the model in the set of supercritical nonlinearities with respect to the scaling 
\[
u(t,x) \mapsto \lambda^{\frac{2}{p-1}} u_0(\lambda^3t,\lambda x), \quad \lambda>0,
\]
 and with respect to the $L^2$ norm or mass critical setting, which is formally conserved by the flow:
\be\label{mass}
\int u^2 (t,x)dx = \int u_0^2 (x)dx.
\ee
This fact will be used several times in this paper, representing the only quantity that is known to be preserved during the solution lifespan. Since the data is in $H^1$, we also have formal conservation of the energy 
\be\label{energy}
\int \left(\frac12 (\partial_x u)^2 - \frac1{p+1} u^{p+1}\right)(t,x)dx = \int \left(\frac12 (\partial_x u_0)^2 - \frac1{p+1} u_0^{p+1}\right)(x)dx.
\ee
For a rigorous description of this fact and the local well-posedness theory, see Section \ref{2}. 

The soliton (or solitary wave)
\be\label{soliton}
\ba
& u(t,x)=Q_c(x-ct-x_0), \quad c>0, ~x_0\in\mathbb R, \\
& Q_c(s)= c^{\frac1{p-1}}Q(\sqrt{c}s), \quad  Q(\sigma)= \left( \frac{p+1}{2\cosh^2\left(\frac{p-1}2 \sigma \right)}\right)^{\frac1{p-1}},
\ea
\ee
is the most relevant explicit solution to any gKdV model. It is also globally defined no matter the power of the nonlinearity, posing a natural obstruction to blow up and blow up speed.

The long time behavior of small solutions for \eqref{4gKdV} in the subcritical regime $p\leq 4$ has been considered by several authors during the past years \cite{PV,CW,KM,HN1,HN2,GPR,H,Tao}, showing scattering of suitable weighted small initial data. The soliton stability and asymptotic stability was studied by Martel and Merle in \cite{MM1} for data in the energy space. The understanding of large data solutions outside the single soliton manifold is not so well-understood. Martel \cite{Martel} constructed a unique asymptotic $N$-soliton solution in the energy space $H^1$, while C\^ote \cite{Cote} improved this construction by adding an arbitrary, sufficiently decaying but large linear dynamics. The two soliton collision problem in the quartic gKdV case has been considered in \cite{MMcol1,MMcol2}, showing the inelastic character of the collision, and confirming the non-integrable character of \eqref{4gKdV} when $p=4$ and more general cases \cite{Mu,MVW}, where self-similar solutions are considered. 

Concerning the central region where solitons are not present in the long time dynamics, and assuming large data, the situation becomes less studied. Expanding ideas developed by G. Ponce and the second author in \cite{MuPo1,MuPo2} and later expanded to the Zakharov-Kuznetsov case in \cite{MMPP1}, the following result was proved:

\begin{theorem}[\cite{MMPP1}]\label{MT0}
Let $u\in C(\mathbb R,H^1(\mathbb R))$ be a global solution to \eqref{4gKdV} in the case $p=4$. Then, for all $\beta\in \mathbb R$ fixed,  
\begin{equation}\label{MT01}
\liminf_{t\to \infty} \int_{|x +  \beta t^{a}| \leq t^b} u^4(t,x) dx =0, \quad 0<b<\frac47, \quad 0\leq a<1-\frac{b}2.
\end{equation}
\end{theorem}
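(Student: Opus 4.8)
The plan is to establish a virial/monotonicity estimate for a localized mass quantity centered on the moving window $x \approx -\beta t^a$, and then extract the $\liminf = 0$ conclusion by a Cesàro-type argument. Concretely, I would introduce a smooth cutoff $\phi$ with $\phi' \geq 0$, $\phi \equiv 0$ for large negative arguments and $\phi \equiv 1$ for large positive arguments, and consider
\begin{equation}\label{virialquantity}
I(t) = \int u^2(t,x)\, \phi\!\left(\frac{x + \beta t^a}{t^b}\right) dx.
\end{equation}
Since $\phi$ is bounded, conservation of mass \eqref{mass} gives $0 \leq I(t) \leq \|u_0\|_{L^2}^2$, so $I$ is bounded. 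Differentiating in time and using the equation \eqref{4gKdV}, one produces three types of terms: (i) the dispersive term $-3\int (\partial_x u)^2 \phi'/t^b\, dx$, which has a favorable sign; (ii) the nonlinear flux term, which after integration by parts is $\sim \frac{2p}{p+1}\int u^{p+1}\phi'/t^b\, dx$ (this is the dangerous one in the supercritical case, though here $p=4$ so one can invoke Gagliardo–Nirenberg); and (iii) the transport/scaling error terms coming from $\partial_t[\phi((x+\beta t^a)/t^b)]$, which are of order $t^{a-1}$ and $t^{-1}$ times $\int u^2 \phi'(\cdots)\,dx$ — these are integrable in time precisely under the constraints $a < 1 - b/2$ and $b < 4/7$, after one balances them against the dispersive gain.

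The core step is to get the dispersive and nonlinear terms to cooperate on the support of $\phi'$, a window of width $\sim t^b$. On that window one writes $\int u^{p+1}\phi' \lesssim \|u\|_{L^\infty(\mathrm{supp}\,\phi')}^{p-1}\int u^2 \phi'$, and then controls $\|u\|_{L^\infty}$ on the window by a local Gagliardo–Nirenberg inequality: $\|u\|_{L^\infty(\text{window})}^2 \lesssim \big(\int u^2\phi'\big)^{1/2}\big(\int (\partial_x u)^2 \phi' + t^{-2b}\int u^2\phi'\big)^{1/2}$ — here is where the width $t^b$ enters, producing a factor $t^{-b/2}$ or similar that must be absorbed. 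The upshot should be a differential inequality of the schematic form
\begin{equation}\label{diffineq}
\frac{d}{dt} I(t) \leq -\frac{c}{t^b}\int (\partial_x u)^2 \phi'\!\left(\tfrac{x+\beta t^a}{t^b}\right) dx + \frac{C}{t^{1-b/2+\delta}}\Big(\int u^2 \phi'(\cdots)\,dx\Big) + (\text{absolutely integrable}),
\end{equation}
for some $\delta>0$ coming from slack in the strict inequalities, where the middle term is handled by absorbing part of it into the dispersive term via Gagliardo–Nirenberg and mass conservation, and the rest is integrable because $1 - b/2 > 1 - (4/7)/2 = 5/7 > \ldots$ wait, more carefully, the exponent constraint $0<b<4/7$ together with $0\le a<1-b/2$ is exactly what makes the transport error $t^{a-1}$ and the width-penalty $t^{-1-b/2}\cdot(\text{mass})$ yield a finite time integral. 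Integrating \eqref{diffineq} from $1$ to $T$ and using boundedness of $I$ then forces $\int_1^\infty t^{-b}\int (\partial_x u)^2 \phi'(\cdots)\,dx\, dt < \infty$, and a parallel (easier) computation with a second cutoff gives $\int_1^\infty t^{-b}\int u^{4} \psi(\cdots)\,dx\,dt < \infty$ for a bump $\psi$ supported in $\{|x+\beta t^a|\le t^b\}$.

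From the finiteness of that weighted time integral, the conclusion \eqref{MT01} follows: if $\liminf_{t\to\infty}\int_{|x+\beta t^a|\le t^b} u^4 \,dx = 2\eta > 0$, then $\int u^4\psi(\cdots)\,dx \geq \eta$ for all large $t$, whence $\int_1^\infty t^{-b}\cdot\eta\, dt = \infty$ since $b<1$, a contradiction. The main obstacle I anticipate is item (ii)–(iii) bookkeeping: closing the differential inequality requires that the nonlinear flux on the window and the transport errors are genuinely dominated, and in the supercritical range $p>5$ (the real subject of the paper, as opposed to this $p=4$ warm-up theorem quoted from \cite{MMPP1}) the Gagliardo–Nirenberg trick fails because $\int u^{p+1}$ is not controlled by mass, so one must instead replace the $H^1$-based local smoothing by an argument that only uses conserved mass and a finer localization of the flux — but for the stated $p=4$ case, the standard subcritical Gagliardo–Nirenberg argument as in \cite{MuPo1,MuPo2,MMPP1} closes the estimate, and the only real work is tracking the exponents $a,b$ through the error terms to land on the sharp constraints $b<4/7$, $a<1-b/2$.
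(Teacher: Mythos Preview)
Your approach differs from that of \cite{MMPP1} and from the analogous argument carried out in Section~\ref{4} of this paper for even supercritical powers. You work with the quadratic virial $I(t)=\int u^2\,\phi$, whereas the correct functional is \emph{linear} in $u$: one takes
\[
\Xi(t)=\frac{1}{\eta(t)}\int u(t,x)\,\psi\!\left(\frac{\tilde x}{\lambda_1(t)}\right)\phi\!\left(\frac{\tilde x}{\tilde\lambda_1(t)}\right)dx, \qquad \tilde x = x+\beta t^a,
\]
with a two-scale weight (in Section~\ref{4} the choice is $\psi=\tanh$, $\phi=\sech^2$, $\lambda_1\ll\tilde\lambda_1$) and an external normalization $\eta(t)$. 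Differentiating this against \eqref{4gKdV} produces directly the term $\frac{1}{\eta\lambda_1}\int u^{p}\,\psi'\phi\,dx$, which for $p=4$ is exactly the localized $L^4$ quantity in \eqref{MT01}. Your quadratic virial instead generates $\int u^{p+1}\phi'=\int u^5\phi'$ from the nonlinearity, and the ``parallel (easier) computation'' meant to recover $\int u^4$ is never specified---it cannot be read off from the $\int u^2\phi$ identity.

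There is also a genuine closure problem in your scheme. After local Gagliardo--Nirenberg and Young on the nonlinear flux, splitting $A^{7/4}B^{3/4}\le \epsilon B+C_\epsilon A^7$ with $A\sim\int u^2\phi'$ and $B\sim\int(\partial_x u)^2\phi'$, the unabsorbed remainder is $C_\epsilon\, t^{-b}A^7\lesssim t^{-b}$ by mass conservation, and $t^{-b}\notin L^1([1,\infty))$ since $b<4/7<1$. Hence boundedness of $I$ does \emph{not} force $\int_1^\infty t^{-b}B\,dt<\infty$, and your argument stalls before the contradiction step. The linear functional avoids this: the normalization $\eta$ and outer scale $\tilde\lambda_1$ are chosen so that $|\Xi(t)|\lesssim \tilde\lambda_1^{1/2}/\eta$ stays bounded by Cauchy--Schwarz and mass conservation, every error term arising from $\partial_t$ and $\partial_x^3$ of the weight and from the outer-scale piece of $\partial_x\varphi$ lands in $L^1_t$, and integrating in time then bounds $\int_2^\infty \frac{1}{\eta\lambda_1}\big(\int u^4\psi'\phi\big)\,dt$; since $\frac{1}{\eta\lambda_1}\notin L^1_t$ under the stated constraints, the $\liminf$ in \eqref{MT01} must vanish. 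The sharp ranges for $a,b$ emerge from this balancing of the two scales against integrability, not from the quadratic-virial bookkeeping you outline.
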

The techniques involved in the proof of Theorem \ref{MT0} have proved very versatile and have been applied to several other models \cite{ACKM,MuPo2,MPS,MMPP1,MMPP2,FLMP,Munoz1,ZL,LM}.  A simple question that one can discuss is how to translate these results to the more involved critical and supercritical setting. The situation in the $L^2$ case $p=5$ is now well understood in the vicinity of solitons. Merle \cite{Merle}, Martel and Merle \cite{MM0,MM1,MM2,MM3,MM4} and Martel-Merle-Rapha\"el \cite{MMR1,MMR2,MMR3} works provided the clearest and most extensive description of the blow up dynamics, including the stable blow up and exotic regimes, and nonexistence of mass critical blow up solutions in certain regimes. Martel and Pilod \cite{MaPi1,MaPi2} have continued providing more insights on the complicated zoo structure of blow up solutions even farther from the soliton solution. See also C\^ote \cite{Cote2}, and Combet and Martel \cite{CoMa1,CoMa2} for additional results related to the construction of special $L^2$ critical blow up solutions. In other dispersive models, such as Nonlinear waves, Klein-Gordon, and Nonlinear Schr\"odinger, many similar results have been proved, see \cite{LP,MR,MR2,MR3} and references therein for more results in these directions.

In the supercritical regime $p>5$, much less is known. Kenig-Ponce-Vega \cite{KPV1} showed local well-posedness, see also Farah, Linares and Pastor \cite{FLP} for a detailed proof. In the latter paper, global well-posedness is also proved for data satisfying a mass/energy rigidity estimate (in particular, suitable small data).  Combet \cite{Com1,Com2} described multi-soliton solutions and their uniqueness in the supercritical regime. C\^ote, Martel and Merle \cite{CMM} further expanded this manifold structure. Koch \cite{Koch} constructed self-similar blow up solutions in the regime $p>5$, $p\sim 5$. This description has been later complemented in several directions by Lan \cite{Lan0,Lan1,Lan2,Lan3}, showing that self-similar blow up $\sim \frac1{t^{\frac13}}$ is stable in this slightly super critical regime. The case of larger power supercritical nonlinearities $p$ remains open in many aspects. For instance, no general description is available away from solitons and outside perturbative regimes.

The purpose of this work is to obtain a characterization of the solution in the \emph{non-solitonic region} for arbitrary $H^1$ data, allowing for both global solutions with unbounded $H^1$ norm and finite-time blow-up. Our approach relies solely on the conservation of mass, suitable virial functionals and a new method to control the growth of the $H^1$ norm from above along the evolution. This provides quantitative information even when $\|\partial_x u(t)\|_{L^2}$ diverges.

We first consider blow up solutions. For finite-time blow-up, we prove that the solution evacuates both spatial half-lines at explicit rates determined by the gradient growth. In particular, if $T^*<\infty$ denotes the blow-up time, then
\[
\|u(t)\|_{L^2(x\geq\beta_1(t))}\to0, \qquad 
\|u(t)\|_{L^2(x\leq-\beta_2(t))}\to0,
\]
as $t\uparrow T^*$,  with $\beta_1,\beta_2$ depending quantitatively on $\|\partial_x u(t)\|_{L^2}$. In the case of even nonlinearities $p=2n\geq 6$, we also describe the normalized concentration profile in windows of size $(t\beta(t))^{2/3}$, $\beta(t)$ a function of $\|\partial_x u(t)\|_{L^2}$, and recovering a long-time decay mechanism that has no analogue in previous treatments of the supercritical regime.

\begin{theorem}[Blow up case]\label{MT2}
Let $u_0$ be in $H^1$. Let $u \in C((-T_*,T^*),H^1(\mathbb R))$ be the corresponding maximally defined solution of gKdV \eqref{4gKdV} such that $u(t=0)=u_0$, and $T^*=T^*(u_0)$, $T_*=T_*(u_0)$. If $T^*>0$ is finite, 
\begin{equation}\label{FF1}
\ba
& \lim_{t\uparrow T^*} \|u(t)\|_{L^2(x\gtrsim  \beta_1(t) )} =0, \qquad \beta_1(t) := t + \int_0^t \| \partial_x u(s)\|_{L^2}^{\frac{p-1}2} ds ,
\ea
\end{equation}
and, for any $\eta>0$ small and $\beta_2(t)$ smooth and increasing function such that $\beta_2(t) \geq  \| \partial_x u(t)\|_{L^2}^{\frac{p-1}2} (T^*-t )|\log^{1+\eta}(T^*-t)|$,
\begin{equation}\label{FF2}
\ba
& \lim_{t\uparrow T^*} \|u(t)\|_{L^2(x\lesssim - \beta_2(t))} =0.
\ea
\end{equation}
Assume now $p=2n$, $n\in \mathbb N \cap [3,\infty)$ and let $s=T^*-t$ be the inverse time variable towards the blow up time. Let $\beta_3(s) \geq \| \partial_x u(t)\|_{L^2}^{n-1}$ be any smooth increasing function in $t$ with at most polynomial growth, and define $\lambda_3, \mu_3$ smooth such that
\[
\lambda_3(s):= \frac{(s\beta_3)^{\frac23}(s)}{\log s}, \qquad  |\mu_3'(s)| \lesssim \frac{\beta_3^{\frac23}(s)}{s^{\frac13}\log s}.
\]
Then 
\begin{equation}\label{MT21}
\liminf_{s\downarrow 0} \frac1{\beta_3(s)} \int_{|x -\mu_3(s) | \leq \lambda_3(s)} u^{2n}(t,x) dx =0. 
\end{equation}
A similar statement is obtained when considering the lower existence limit $-T_*$, with the corresponding modifications.
\end{theorem}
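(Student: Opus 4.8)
The plan is to obtain all three statements from virial–type monotonicity identities built on the conservation laws of \eqref{4gKdV}. The mass density satisfies $\partial_t(u^2)=\partial_x F$ with $F:=-2u\,\partial_x^2u+(\partial_x u)^2-\tfrac{2p}{p+1}u^{p+1}$, so for any smooth weight $\varphi$,
\[
\frac{d}{dt}\int u^2\varphi\,dx=-3\int(\partial_x u)^2\varphi'\,dx+\int u^2\varphi'''\,dx+\frac{2p}{p+1}\int u^{p+1}\varphi'\,dx,
\]
while $\partial_t u=-\partial_x(\partial_x^2u+u^p)$ gives, for any smooth (possibly $t$–dependent) weight $\psi$,
\[
\frac{d}{dt}\int u\,\psi\,dx=\int u\,\partial_x^3\psi\,dx+\int u^{p}\,\partial_x\psi\,dx+\int u\,\partial_t\psi\,dx .
\]
The only genuinely supercritical term is the nonlinear flux $\int u^{p+1}\varphi'$, which I would control by combining conservation of mass \eqref{mass} with $\|u\|_{L^\infty}^2\lesssim\|u\|_{L^2}\|\partial_x u\|_{L^2}$, obtaining $|\int u^{p+1}\varphi'\,dx|\le\|u\|_{L^\infty}^{p-1}\int u^2\varphi'\,dx\lesssim\|\partial_x u(t)\|_{L^2}^{(p-1)/2}\int u^2\varphi'\,dx$. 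Thus, even when $\|\partial_x u(t)\|_{L^2}$ diverges, the bad term is dominated by $\|\partial_x u(t)\|_{L^2}^{(p-1)/2}$ times the density $u^2\varphi'$ that an appropriate drift in the weight can absorb; this is the source of the exponents in $\beta_1,\beta_2,\beta_3$.

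For the right–hand far field \eqref{FF1} I would fix a smooth sigmoid $\varphi$ with $\varphi'\ge0$ exponentially localized, $\varphi\equiv0$ on $(-\infty,-1]$ and $\varphi\equiv1$ on $[1,\infty)$ (so $|\varphi'''|\lesssim\varphi'$), and set, for each $y\ge0$,
\[
\mathcal I_y(t):=\int u^2(t,x)\,\varphi\big(x-m(t)-y\big)\,dx,\qquad m'(t):=C_0\big(1+\|\partial_x u(t)\|_{L^2}^{(p-1)/2}\big),
\]
with $C_0$ large enough that the estimates above yield $\tfrac{d}{dt}\mathcal I_y(t)\le-3\int(\partial_x u)^2\varphi'(x-m(t)-y)\,dx\le0$ for every $y\ge0$; note $m(t)\simeq\beta_1(t)$. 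Monotonicity gives $\mathcal I_y(t)\le\mathcal I_y(0)=\int u_0^2\varphi(x-y)\,dx$ for all $t\in[0,T^*)$, and the right–hand side tends to $0$ as $y\to+\infty$. Hence for each $\varepsilon>0$, choosing $y_\varepsilon$ with $\mathcal I_{y_\varepsilon}(0)<\varepsilon$ forces $\|u(t)\|_{L^2(x\ge m(t)+y_\varepsilon+1)}^2<2\varepsilon$ for all $t$, and since $m$ is increasing with $m(t)\simeq\beta_1(t)$ this gives \eqref{FF1} as $t\uparrow T^*$.

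The left–hand far field \eqref{FF2} is the main obstacle, because the Airy dispersion transports mass to the left and a merely translated weight no longer has a favourable sign. I would localize on the left with a weight adapted to the blow–up time, of the shape $\varphi\big((x+\beta_2(t))/\ell(t)\big)$ for a slowly varying scale $\ell(t)$ (or a truncated exponential variant), and pay for its time dependence: the extra terms are $\lesssim\big(|\ell'|/\ell+|\beta_2'|/\ell\big)\int u^2\varphi'$. Balancing these against the supercritical flux $\|\partial_x u(t)\|^{(p-1)/2}$ and among themselves, and integrating the resulting differential inequality from $t$ up to $T^*$ (so that the drift accumulated over the remaining lifespan reorganizes into $\|\partial_x u(t)\|^{(p-1)/2}(T^*-t)$), produces the factor $(T^*-t)$, while the extra $|\log^{1+\eta}(T^*-t)|$ is exactly what renders the residual errors summable over dyadic slices $T^*-t\sim 2^{-k}$, via $\int_0 s^{-1}|\log s|^{-1-\eta}\,ds<\infty$. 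Conservation of mass \eqref{mass} is used to keep the functional bounded in spite of a possibly unbounded $H^1$ norm; the delicate point is to arrange all error contributions carrying the non–integrable factor $\|\partial_x u(t)\|^{(p-1)/2}$ so that each is absorbed either by the monotone main term or by the $|\log^{1+\eta}|$–summable remainder.

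Finally, for \eqref{MT21} with $p=2n$ even, the decisive structural fact is the sign $u^{2n}=(u^n)^2\ge0$, which enters through the second identity above: for a weight $\psi$ with $\partial_x\psi\ge0$ the term $\int u^{2n}\,\partial_x\psi\,dx$ is a good (nonnegative) term. I would take $\psi=\psi(t,x)$ to be a bump profile of width $\lambda_3(s)$ centered at $\mu_3(s)$, with $\partial_x\psi\ge0$ on the window and $\psi\in L^2$, evolved so as to neutralize the linear term $\int u\,\partial_x^3\psi$ — choosing $\psi$ to solve $\partial_t\psi+\partial_x^3\psi=0$ eliminates it and keeps $\|\psi(t)\|_{L^2}$ constant, hence $\mathcal V(t):=\int u\psi\,dx$ bounded. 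One then gets $\tfrac{d}{dt}\mathcal V(t)\ge\tfrac{c}{\lambda_3(s)}\int_{|x-\mu_3(s)|\le\lambda_3(s)}u^{2n}\,dx-(\text{errors})$, the errors being estimated through Cauchy–Schwarz and \eqref{mass} by $\|u_0\|_{L^2}$ times weighted $L^2$ norms of $\psi$ and $\partial_x\psi$; the prescribed relations $\lambda_3=(s\beta_3)^{2/3}/|\log s|$ and $|\mu_3'|\lesssim\beta_3^{2/3}/(s^{1/3}|\log s|)$ are precisely those that make these errors integrable in $s$ near $0$ while keeping $\mathcal V$ bounded. Integrating in $t\in[0,T^*)$ then yields $\int_0\beta_3(s)^{-1}\big(\int_{|x-\mu_3(s)|\le\lambda_3(s)}u^{2n}\,dx\big)\,g(s)\,ds<\infty$ for a weight $g$ of $1/(s|\log s|)$ type with $\int_0 g=+\infty$, whence \eqref{MT21}. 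The statement relative to $-T_*$ follows by applying all of the above to $\widetilde u(t,x):=u(-t,-x)$, which again solves \eqref{4gKdV}, with forward blow–up at $T_*$ becoming backward blow–up at $-T_*$ for $u$ and the reflection $x\mapsto-x$ exchanging the two half–lines (and leaving the symmetric windows of \eqref{MT21} invariant).
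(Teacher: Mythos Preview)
Your argument for \eqref{FF1} is essentially the paper's, and is fine. The difficulties are in \eqref{FF2} and \eqref{MT21}.

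For \eqref{FF2} you overlook the term that actually makes the left half--line hard. With a weight $\varphi\big((x+\beta_2(t))/\ell(t)\big)$ localizing on $\{x\lesssim -\beta_2(t)\}$ one necessarily has $\varphi'\le 0$, so in your own mass identity the gradient term $-3\int(\partial_x u)^2\varphi'\,dx$ is \emph{nonnegative} and cannot be ``absorbed by the monotone main term''; there is no monotone main term. The paper does not try to absorb it: it bounds it crudely by $\|\partial_x u(t)\|_{L^2}^2/\mu(t)$ (with $\mu\sim\beta_2$), and then invokes the local--theory lower bound on the blow--up rate (Lemma~\ref{BU}, cf.\ Remark~\ref{rem_clave}) to get $\|\partial_x u(t)\|_{L^2}^2\lesssim\|\partial_x u(t)\|_{L^2}^{(p-1)/2}$ for $p\ge 5$, so that this term, the nonlinear term, and the $\varphi'''$ term are all $\lesssim 1/((T^*-t)|\log(T^*-t)|^{1+\eta})$ and hence integrable up to $T^*$. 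Your sketch never mentions this lower bound, and without it the gradient contribution is not controlled.

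For \eqref{MT21} your proposal is internally inconsistent: you want $\psi$ to be simultaneously ``a bump profile of width $\lambda_3(s)$ centered at $\mu_3(s)$ with $\partial_x\psi\ge 0$ on the window'' \emph{and} a solution of $\partial_t\psi+\partial_x^3\psi=0$. Airy evolution does not preserve bump shape, positivity of $\partial_x\psi$, or localization on a prescribed moving window; it creates oscillatory Airy tails, so the ``good'' piece $\int u^{2n}\partial_x\psi$ need not dominate the rest. The paper does the opposite of eliminating the linear term: it takes the explicit weight $\varphi=\theta^{-1}\tanh((x-\mu)/\lambda_1)\sech^2((x-\mu)/\lambda_2)$, so that $\partial_x\varphi$ has a manifestly positive main part $\sim (\theta\lambda_1)^{-1}\sech^2\sech^2$ producing the lower bound $\gtrsim (\theta\lambda_1)^{-1}\int_{|x-\mu|\le\lambda_1}u^{2n}$, and then estimates $\int u\,\partial_t\varphi$ and $\int u\,\partial_x^3\varphi$ directly via Cauchy--Schwarz and mass conservation. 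The scale choices $\theta=(s\beta)^{1/3}\log^2 s$, $\lambda_2=(s\beta)^{2/3}$, $\lambda_1=\lambda_2/\log^2 s$ are tuned so that every such error is $\lesssim (s\log^2 s)^{-1}$; verifying this again requires the minimal blow--up rate (to guarantee $\beta(s)\gtrsim s^{-1/2}$), which your sketch omits.
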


Notice that $u$ is not required to be small, nor close to a soliton solution. 
Since \eqref{4gKdV} is Hamiltonian and invariant under the transformation $u(t,x) \mapsto u(-t,-x)$, it is enough to prove Theorem \ref{MT2} for the case $T^*>0$. One may believe that \eqref{MT21} is in contradiction with the well-known fact that for $s_p$ as in \eqref{s_p}, the $H^{s_p}$ norm of the solution does not exist as $t\uparrow T^*<+\infty$. This is not the case here, since $p=2n>5$ and we are only considering a fraction of the $L^2$ norm of the solution. It is also recognized that the full $H^1$ norm blows up at the blow up time as well. Finally, the choice $\beta_3(s) \geq \| \partial_x u(t)\|_{L^2}^{n-1}$ increasing is made since $\| \partial_x u(t)\|_{L^2}^{n-1}$ may not be diverging to infinity in an increasing fashion. If $ \| \partial_x u(t)\|_{L^2}$ is smooth and increasing for all $t$ close to $T^*$, then one can take $\beta_3(t) = \| \partial_x u(t)\|_{L^2}^{n-1}$. Additionally, \eqref{MT21} is sharp in the following sense: there are blow up solutions in the case $p=5$ that stays on compact regions of space \cite{MaPi2}. 

Later (Lemma \ref{BU}) we will recall the possible classical ``minimal'' blow up rate given by the local well-posedness theory
\[
 \| \partial_x u(t)\|_{L^2}\geq \frac{C}{(T^*-t)^{\frac{1-s_p}3}},  
\]
that provides a starting point description of the growth of the term $ \| \partial_x u(t)\|_{L^2}^{\frac{p-1}2}$ as $t$ approaches the assumed blow up time $T^*$. This quantitative rate has many applications, and it will be used to access to certain estimates needed to conclude \eqref{FF2} and \eqref{MT21}. In the $L^2$ critical case, many particular blow up rates are known, starting with the stable $t^{-1}$, see \cite{MMR1,MMR2,MMR3,MaPi1}. The zoology of possible blow up rates in the critical case is quite crowded and more or less well-understood, see \cite{MaPi2} for a detailed account. 

\medskip

Now we provide a complementary version of the previous result in the case of globally defined solutions. For global solutions, an analogous dichotomy holds: either the $H^1$ norm remains bounded and one recovers a natural extension of the subcritical asymptotic theory, or $\|\partial_x u(t)\|_{L^2}\to\infty$ along a sequence, in which case the same normalized virial analysis yields decay on compact and slowly expanding regions.

\begin{theorem}[Globally defined case]\label{MT2bis}
Let $u_0$ be in $H^1$. Let $u \in C((-T_*,T^*),H^1(\mathbb R))$ be the corresponding maximally defined solution of gKdV \eqref{4gKdV} such that $u(t=0)=u_0$, and $T^*=T^*(u_0)$, $T_*=T_*(u_0)$. If $T^* =+\infty$, a modified version of \eqref{MT01} is satisfied. First of all, one has
\begin{equation}\label{FF3}
\ba
& \lim_{t\to +\infty} \|u(t)\|_{L^2(x\gtrsim  \beta_4(t) )} =0, \qquad \beta_4(t) :=  t + \int_0^t \| \partial_x u(s)\|_{L^2}^{\frac{p-1}2} ds ,
\ea
\end{equation}
and, for any $\eta>0$, 
\begin{equation}\label{FF4}
\ba
& \lim_{t\to +\infty} \|u(t)\|_{L^2(x\lesssim - \beta_5(t))} =0, \qquad \beta_5(t):= \left(1 + \| \partial_x u(t)\|_{L^2}^{\frac{p-1}2} \right) t \log^{1+\eta}t.
\ea
\end{equation}
Finally, assume $p=2n$, $n\geq 3.$ Then two cases are present.
\medskip
\begin{enumerate}
\item Case $\sup_{t\geq 0} \| \partial_x u(t)\|_{L^2} <+\infty$. Here \eqref{MT01} is satisfied with minor modifications: for all $\beta\in \mathbb R$ fixed,  
\begin{equation}\label{MT01bis}
\ba
& \liminf_{t\to \infty} \int_{|x +  \beta t^{a}| \leq t^b} u^{2n}(t,x) dx =0, \qquad  0<b<\frac{2n}{4n-1} \quad \hbox{and} \quad 0\leq a<1-\frac{b}2.
\ea
\end{equation}
\item Case $\limsup_{t\to +\infty} \|\partial_xu(t)\|_{L^2}=+\infty$. Let $\beta_6(t) \geq \| \partial_x u(t)\|_{L^2}^{n-1}$ be any smooth increasing function with at most polynomial growth and define $ \lambda_6(t)$, $\mu_6(t)$ such that
\[
 \lambda_6(t):= \frac{(t\beta_6)^{\frac23}(t)}{\log t}, \qquad  |\mu_6'(t)| \lesssim \frac{\beta_6^{\frac23}(t)}{t^{\frac13}}.
\] Then 
\begin{equation}\label{MT22}
\liminf_{t\to \infty} \frac1{\beta_6(t)} \int_{|x -\mu_6(t) | \leq \lambda_6(t)} u^{2n}(t,x) dx =0. 
\end{equation}
\end{enumerate}
A similar statement is obtained when considering the lower existence limit $-T_*$, with the required modifications.
\end{theorem}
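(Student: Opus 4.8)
Throughout I reduce to $T^*=+\infty$ and argue forward in time, since $u(t,x)\mapsto u(-t,-x)$ turns the statement at $-T_*$ into one at $T^*$, exactly as for \thm{MT2}; the four conclusions \eqref{FF3}, \eqref{FF4}, \eqref{MT01bis}, \eqref{MT22} are four separate virial computations sharing one device. Whenever a weighted quantity contains $\partial_xu$ or the nonlinearity, that term is localized and then controlled, via a Gagliardo--Nirenberg inequality on the support of the weight together with the conserved mass $\|u(t)\|_{L^2}=\|u_0\|_{L^2}$, by a fixed power of $\|\partial_xu(t)\|_{L^2}$ times a mass-bounded quantity; it is then absorbed by a fast-enough moving frame, beaten by the supercritical gain $\tfrac{p-1}2>2$, or normalized away. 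Concretely, for \eqref{FF3} I test the equation against $\phi(x-\beta_4(t))$ with $\phi$ smooth increasing, $0\le\phi'\le1$, $|\phi'''|\lesssim\phi'$, obtaining
\[
\frac{d}{dt}\int\phi(x-\beta_4(t))u^2 = -\beta_4'(t)\int\phi'u^2+\int\phi'''u^2-3\int\phi'u_x^2+\frac{2p}{p+1}\int\phi'u^{p+1}.
\]
Only the flux is dangerous, and $\int\phi'u^{p+1}\le\|u\|_{L^\infty}^{p-1}\int\phi'u^2\lesssim(\|u\|_{L^2}\|\partial_xu\|_{L^2})^{\frac{p-1}2}\int\phi'u^2\lesssim\|\partial_xu(t)\|_{L^2}^{\frac{p-1}2}\int\phi'u^2$ by mass conservation; since $\beta_4'(t)=1+\|\partial_xu(t)\|_{L^2}^{\frac{p-1}2}$ dominates this and $\int\phi'''u^2\lesssim\int\phi'u^2$, the quantity is nonincreasing, hence convergent, and running the same bound with the shifted weights $\phi(x-\beta_4(t)-y_0)$ --- whose value at a fixed initial time tends to $0$ as $y_0\to\infty$ by dominated convergence --- upgrades this to $\|u(t)\|_{L^2(x\gtrsim\beta_4(t))}\to0$.

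\emph{Left half-line.} For \eqref{FF4} the weight $\phi(-x/\beta_5(t)-1)$, a smoothed indicator of $\{x\lesssim-\beta_5(t)\}$, moves against the dispersion, so the analogue of the identity above carries the dispersive term $+\tfrac{3}{\beta_5}\int\phi'u_x^2$ with the wrong sign. This is where $p>5$ enters: since $\tfrac{p-1}2>2$ one has $\frac{\|\partial_xu\|_{L^2}^2}{1+\|\partial_xu\|_{L^2}^{(p-1)/2}}\lesssim1$, so this term is $\lesssim\frac{1}{t\log^{1+\eta}t}$, which is integrable in time --- precisely what the $\log^{1+\eta}$ factor in $\beta_5$ is there to guarantee --- and similarly the localized flux and the $\phi'''$ term are time-integrable. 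A monotonicity-and-shift argument as above then gives \eqref{FF4}.

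\emph{Even powers.} When $p=2n$ and $\sup_t\|\partial_xu(t)\|_{L^2}<\infty$ (case (1)), the solution obeys the same a priori bounds as in the energy space, and I would follow \cite{MMPP1} with the exponent $4$ replaced by $2n$: a virial localized to $\{|x+\beta t^a|\le t^b\}$ whose nonnegative observable is $\int(\text{bump})\,u^{2n}$, the ranges $0<b<\frac{2n}{4n-1}$, $0\le a<1-\frac b2$ being dictated by balancing the error terms (for $n=2$ this reads $b<\frac47$, recovering \thm{MT0}); time-integrability yields \eqref{MT01bis}. When $\limsup_t\|\partial_xu(t)\|_{L^2}=+\infty$ (case (2)), I use $\mathcal I(t)=\int w(t,x)\,u(t,x)\,dx$ with $w(t,\cdot)$ increasing across the window of width $\lambda_6(t)$ about $\mu_6(t)$, so that the even power makes $\int w_xu^{2n}\ge0$ a genuine observable; since Gagliardo--Nirenberg and the conserved mass give only $\int_{|x-\mu_6|\le\lambda_6}u^{2n}\lesssim\|\partial_xu(t)\|_{L^2}^{n-1}$, one must normalize by $\beta_6\ge\|\partial_xu(t)\|_{L^2}^{n-1}$. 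The calibrations $\lambda_6=(t\beta_6)^{2/3}/\log t$ and $|\mu_6'|\lesssim\beta_6^{2/3}t^{-1/3}$ are chosen so that, integrating $\frac{d}{dt}\mathcal I$ against $dt/(t\beta_6)$, the commutator $\int\partial_tw\,u$, the remainder $\int w_{xxx}u$ and the boundary contributions grow like $o(\log t)$, while the left side would grow like $\log t$ if $\beta_6^{-1}\int_{\text{window}}u^{2n}$ stayed bounded below --- the contradiction yielding \eqref{MT22}.

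\emph{Main obstacle.} The real difficulty is not any one estimate but keeping the whole construction robust to the possibility that $\|\partial_xu(t)\|_{L^2}\to\infty$: every error term must be bookkept onto the integrable (or $o(\log t)$) side of the borderline rates above, which depends on the precise interplay of the supercritical gain $\tfrac{p-1}2>2$ with the logarithmic corrections and with the Gagliardo--Nirenberg losses $\|\partial_xu\|_{L^2}^{(p-1)/2}$ and $\|\partial_xu\|_{L^2}^{n-1}$. A secondary technical point is making the first-power virial $\int w\,u$ in the even case well-defined and bounded although $u$ lies only in $L^2$ --- this is exactly where the far-field estimates \eqref{FF3}--\eqref{FF4} just proved feed back in. The even-power assumption itself plays only the modest role of supplying the sign $\int w_xu^{2n}\ge0$.
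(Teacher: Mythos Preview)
Your treatment of \eqref{FF3}, \eqref{FF4}, and \eqref{MT01bis} matches the paper's: the same Martel--Merle monotonicity for the right half-line, the same supercritical observation $\|\partial_xu\|_{L^2}^2\big/(1+\|\partial_xu\|_{L^2}^{(p-1)/2})\lesssim 1$ that makes the left half-line integrable, and the same referral to \cite{MMPP1} for the bounded-gradient even-power case.

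For \eqref{MT22} your overall architecture (first-power virial, nonnegative observable $\int w_x u^{2n}$, normalization by $\beta_6$, divergence of $\int dt/(t\log t)$) is correct, but one concrete point is wrong. The paper does \emph{not} use \eqref{FF3}--\eqref{FF4} to make $\int w\,u$ well-defined; far-field $L^2$ smallness cannot control $\int_{|x|\gg1} w\,u$ when $w$ is merely bounded. Instead the weight is built with \emph{two} spatial scales and an explicit time normalization:
\[
\varphi(t,x)=\frac{1}{\theta(t)}\tanh\!\Big(\frac{x-\mu(t)}{\lambda_1(t)}\Big)\sech^2\!\Big(\frac{x-\mu(t)}{\lambda_2(t)}\Big),
\]
with $\lambda_1=(t\beta)^{2/3}/\log t\ll\lambda_2=(t\beta)^{2/3}$ and $\theta=(t\beta)^{1/3}\log^2 t$. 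The outer $\sech^2$ puts $\varphi\in L^2$ with $\|\varphi\|_{L^2}\sim\lambda_2^{1/2}/\theta=1/\log^2 t$, so $|\int\varphi u|\lesssim 1/\log^2 t$ follows from Cauchy--Schwarz and mass conservation alone --- no feedback from the far-field estimates. The two-scale structure is also what separates the good and bad pieces of $\partial_x\varphi$: the piece where $\partial_x$ hits $\tanh$ is nonnegative with coefficient $1/(\theta\lambda_1)=1/(t\beta\log t)$ and produces the observable, while the piece where $\partial_x$ hits the outer $\sech^2$ carries $1/(\theta\lambda_2)$ and, after Gagliardo--Nirenberg, is bounded by $\|\partial_xu\|_{L^2}^{n-1}/(\theta\lambda_2)\le\beta/(\theta\lambda_2)=1/(t\log^2 t)$, which is integrable. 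With a single scale this bad piece would be the same order as the good one and could not be absorbed. So the missing idea in your sketch is precisely this two-scale localization plus the $1/\theta$ normalizer, which together replace your appeal to \eqref{FF3}--\eqref{FF4}.
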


These results give the first general description of non-solitonic dynamics for $p>5$, without smallness assumptions, without proximity to the soliton manifold, and without any a priori control of the $H^1$ norm. The method is robust and relies only on virial estimates adapted to the supercritical scaling, suggesting possible extensions to other dispersive models in $L^2$-supercritical regimes.

Theorems \ref{MT2} and \ref{MT2bis} give a detailed description of the nonsolitonic region, as described now: in \eqref{FF1} and \eqref{FF3}, the right half line behavior is described, depending on the case where one has wither blow up or a globally defined solution (not necessarily bounded in time), and in \eqref{FF2} and \eqref{FF4} the left half line is described. Finally, in \eqref{MT21}-\eqref{MT01bis} and \eqref{MT22}, a description of the $L^p$ norm of the solution compared with the norm $\|\partial_xu(t)\|_{L^2}^{\frac{p-2}{2}}$ along a sequence of times is given, in the case $p=2n$, $n\geq 3.$ Essentially, these results give precise information on every non-solitonic region, and it remains to understand the regions where solitons are present in the blow up or globally well-defined cases. 

Note that \eqref{MT21} and \eqref{MT22} give precise new information, in the following sense: assume with no loss of generality that the solution is global and consider \eqref{MT22}; using the classical Gagliardo-Nirenberg inequality \eqref{GN1}, and the boundedness of the $L^2$ norm of the solution, one has
\[
\frac1{\beta_6(t)} \int_{|x -\mu(t) | \leq \lambda(t)} u^{2n}(t,x) dx \leq \frac{C}{\beta_6(t)}\|\partial_xu(t)\|_{L^2}^{n-1} \leq C.
\]
Therefore, \eqref{MT22} describes additional asymptotic properties of the solution as time tends to infinity, even in the case where the $L^2$ norm of the gradient is growing in time. This decay is measured with respect to the norm $\|\partial_xu(t)\|_{L^2}^{n-1}$, which may be growing to infinity. Of course, the closer $\beta_6(t)$ is to $\|\partial_xu(t)\|_{L^2}^{n-1}$, the stronger the approximation and the main result, but as mentioned before, we have no clarity if the norm $\|\partial_xu(t)\|_{L^2}^{n-1}$ is increasing for any blowing up or globally defined supercritical solution. As far as we know, this may be the case (increasing), see the works by Martel-Merle-Rapha\"el \cite{MMR3} and Martel-Pilod \cite{MaPi1} on exotic blow up regimes in the $L^2$ critical case. 

\subsection{Key new elements}

The study of large-data dynamics for the supercritical gKdV equation
\eqref{4gKdV} faces several conceptual and technical obstacles that do not
appear in the subcritical or critical settings.  
In the subcritical regime $p<5$, the energy functional controls the 
$H^1$ norm, virial identities remain coercive, and the global dynamics
can be decoupled into solitonic and dispersive components.
At the critical power $p=5$, the blow-up mechanism and the long-time
behavior near the soliton manifold are by now well-understood, but global
control of the $H^1$ norm remains delicate and requires fine modulation
arguments.

In contrast, in the supercritical case $p>5$ the problem becomes
structurally more complex.  
First, no uniform-in-time control of $\|\partial_x u(t)\|_{L^2}$ is
available, even for global solutions, and no a priori bounds prevent the
$H^1$ norm from growing indefinitely. Consequently, there is no control of the $L_x^\infty$ norm in time, key to treat nonlinearities in 1D. 
Second, energy conservation no longer provides a functional capable of 
controlling the nonlinearity; the equation lies entirely in the
$L^2$-supercritical regime, and no monotonicity principle is known for
the supercritical dynamics.  
Third, virial-type arguments used successfully in 
\cite{MuPo1,MMPP1,ACKM,Munoz1} fail in their classical form: 
the remainder terms decay too slowly, and the error structure is dominated
by quantities proportional to $\|\partial_x u(t)\|_{L^2}^{\frac{p-1}{2}}$. An additional difficulty arises from the asymptotic geometry or decoupling of 
supercritical solutions, that is still unknown.
Even the existence of a canonical soliton resolution---or of
a decomposition into a solitonic component plus a dispersive remainder—is
well beyond reach.
Nevertheless, the evolution exhibits a meaningful separation into
``solitonic'' regions ($|x|\sim a(t)$) and ``non-solitonic'' or
``radiative'' regions ($|x|\ll a(t)$ or $|x|\gg a(t)$), where $a(t)$ is a function to be found.
Understanding the behavior of the solution in these non-solitonic zones
provides fundamental information about the global structure of the flow.

The guiding idea of this work is that, even in the absence of 
$H^1$-bounds, certain virial-type functionals can be designed so that
their coercivity depends only on the $L^2$ mass (which is conserved) and 
on controlled combinations of the form
\[
\|\partial_x u(t)\|_{L^2}^{\frac{p-1}{2}},
\]
which play a crucial role in the supercritical scaling.  
These quantities naturally appear both in the ``speed scale'' at which
mass propagates and in the dominant error terms of the virial
identities.  
Our method extracts useful information even when 
$\|\partial_x u(t)\|_{L^2}$ is allowed to grow to infinity, whether at
finite or infinite time.

A second key ingredient is the identification of regimes where
localized virial functionals capture the dynamics in 
non-solitonic regions.  
Roughly speaking, after renormalizing by an appropriate growth factor
$\beta(t)$, tailored to the potential growth of $\|\partial_x u(t)\|_{L^2}$,
we show that
\[
\int_{|x-\mu(t)|\le \lambda(t)} u^p(t,x)\,dx
\]
must become small along a sequence of times, with explicit scales
$\lambda(t)$ and centers $\mu(t)$ dictated by the supercritical
character of the flow.  
For even nonlinearities $p=2n$, the structure of the nonlinearity
allows additional cancellation, yielding improved decay on compact sets. These ideas allow us to overcome the lack of coercivity in $H^1$,
identify the correct renormalization scales, and obtain the first
general description of the non-solitonic region for supercritical gKdV
solutions, including both global and finite-time blow-up dynamics.

\subsection{Idea of proofs}

The proofs of Theorems \ref{MT2} and 
\ref{MT2bis} rely on a combination of virial-type identities,
weighted energy estimates and carefully tuned spatial cutoffs adapted to the
supercritical scaling.  
The main conceptual difficulty is the possible unbounded growth of
$\|\partial_x u(t)\|_{L^2}$, which prevents the use of classical virial
coercivity.  
Our strategy isolates the contribution of the non-solitonic region and works
with weights whose evolution compensates the supercritical growth rate.

\medskip

\noindent
\emph{Far-field virial identities}.
We introduce localized virial functionals of the form
\[
\mathcal V(t)
= \int \phi\Big(\frac{x-x_0(t)}{R(t)}\Big)\,u^2(t,x)\,dx,
\]
with time-dependent center $x_0(t)$ and scale $R(t)$.  
After differentiating in time and using the structure of \eqref{4gKdV}, the
main term is controlled by an expression proportional to
$\|\partial_x u(t)\|_{L^2}^{\frac{p-1}{2}}$, which dictates the natural
propagation scale in both the global dynamics and the blow-up regime.

\medskip

\noindent
\emph{Control of error terms via mass conservation}.
Since the mass is conserved, terms that contain no derivatives of $u$ are
uniformly bounded.  
All remaining terms involve weights decaying either at infinity or inside the
radiative region, where the scaled virial functionals retain coercivity. This step will require further work if one wants to work fully in $\dot H^{s_p}$ instead of $H^s$, $s>s_p$.

\medskip

\noindent
\emph{Renormalization by the growth scale \texorpdfstring{$\beta(t)$}{beta(t)}}.
To handle the possible divergence of $\|\partial_x u(t)\|_{L^2}$, we
introduce a renormalization factor $\beta(t)$ satisfying
\[
\beta(t)\ge \|\partial_x u(t)\|_{L^2}^{\frac{p-1}{2}}.
\]
This renormalization produces a differential inequality showing that the
localized mass must become small along some sequence of times, which yields
the desired $\liminf$ decay statements.  When $p=2n$, the nonlinearity is even and additional positivity
occurs inside the virial identity.  
These yield improved local decay on compact sets and allow the construction of
smaller scales $R(t)$, which lead to the stronger bounds in
\eqref{MT21}--\eqref{MT22}.

\medskip

\noindent
\emph{Blow-up vs.\ global behavior}
The proofs in the blow-up and global cases are nearly identical except for the
choice of the center and scale functions $x_0(t)$ and $R(t)$, which is delicate.
In finite time, many blow-up rates are allowed, but they are always related to the appearance of the factor $(T^*-t)|\log(T^*-t)|$.
In the global case, the same machinery gives decay along suitable sequences as
$t\to\infty$. Together, these ingredients provide a unified method that captures the
spatial distribution of mass in all non-solitonic regions, without making any
assumptions on smallness, modulation, or soliton resolution.

\medskip


Notice that suitable modifications of \eqref{MT21} and \eqref{MT22} can be obtained by using, instead of the physically relevant $L^2$ norm of the gradient, the critical $H^{s_p}$ norm. In this case, the proofs must be modified using \eqref{GN2} instead of \eqref{GN1}. 

We believe that the method introduced here can be applied to several other dispersive models with a good well-posedness theory, in the $L^2$ supercritical regime. The situation may be dependent also of a possible $H^1$ supercritical regime, as it happens in major dimensions. In the gKdV case, this is not the case, we have always been in a subcritical energy regime. As far as we understand, Theorems \ref{MT2} and \ref{MT2bis} do not require any smallness condition, and no assumptions on the possible rate of decay. The case of odd powers requires more care, since it is usual that in this case new solutions may appear, destroying the convergence analysis \cite{AM,FFMP}. It is believed, from \cite{MuPo1,FFMP}, that strange nondecaying solutions others than solitons may not exist in the full supercritical regime, hence we conjecture that the convergence around spatial zero in Theorems \ref{MT2} and \ref{MT2bis} may hold in the odd power supercritical case.  

\subsection*{Organization of this work} In Section \ref{2} we introduce some preliminaries concerning the linear and nonlinear flow, including the local theory developed by \cite{KPV1} and the minimal rate of blow up. Section \ref{3} deals with the behavior of the solution to \eqref{4gKdV} in far field regions, and the strong far field convergences in Theorem \ref{MT2}. Finally, Section \ref{4} contains the local proofs of normalized decays in Theorem \ref{MT2} in the case of even power nonlinearities.

\subsection*{Acknowledgments} Part of this work was done while C.\ M. was present at BIRS New Synergies in Partial Differential Equations (25w5403) workshop at Banff, whose support is greatly acknowledged. We thank G. Ponce and C. Maul\'en for several useful comments and remarks concerning a first version of this manuscript.

\section{Preliminaries}\label{2}

Recall the classical Gagliardo-Nirenberg inequality
\be\label{GN1}
\int |v|^{q}(x)dx \leq C(q) \left( \int v^2(x)dx \right)^{\frac{q+2}{4}} \left( \int (\partial_x v)^2(x)dx\right)^{\frac{q-2}{4}}, \quad q>2,
\ee
valid for $v\in C_0^\infty(\mathbb R)$ and a fortiori for $v\in H^1(\mathbb R)$. We also require the more detailed estimate
\be\label{GN2}
\ba
& \left( \int v^{q}(x)dx \right)^{\frac1q} \leq C \left( \int v^2(x)dx \right)^{\frac{1-\alpha}{2}} \left( \int (D_x^s v)^2(x)dx\right)^{\frac{\alpha}{2}}, \\
& \frac1q = \alpha \left( \frac{1}{2} - s\right) +\frac{1-\alpha}{2} =\frac12 - s\alpha,
\ea
\ee
and finally,
\be\label{GN3}
\sup_{x\in \mathbb R} |v(x)| \leq C  \left( \int v^2(x)dx \right)^{\frac14} \left( \int (\partial_x v)^2(x)dx\right)^{\frac{1}{4}},
\ee  
valid for $v\in H^1(\mathbb R)$. Notice that $\sup$ is well-defined since $v$ is locally H\"older continuous. In this paper, both estimates \eqref{GN1} and \eqref{GN3} will be used but their consequences are equivalent. Note additionally that for larger dimensions the output may be different.

\subsection{Quick review on local well-posedness} Let us review some classical result concerning the local well-posedness of \eqref{4gKdV}. For further details, see \cite{FLP} and \cite{KPV1}. Notice that $s$ below will denote the Sobolev regularity exponent, and will be different from the variable $s=T^*-t$ defined in Theorem \ref{MT2}. 

\begin{lem}[Corollary 2.18 in \cite{KPV1}]\label{LWP}
Let $u_0\in H^s(\mathbb R)$ with $s>s_p$. Then there exists $T=T(\|u_0\|_{H^s})>0$, with $T(\rho;s)\to 0 $ as $\rho\to 0$ and a unique strong solution to \eqref{4gKdV} such that $u\in C([-T,T],H^s(\mathbb R))$. Moreover, the solution satisfies
\be\label{finite_norm}
\ba
& \| D_x^{s_p} \partial_x u\|_{L^\infty_x L^2_T}+\| D_x^{s_p} \partial_x u\|_{L^5_x L^{10}_T}<+\infty,\\
& \| u\|_{L^\infty_T H^s_x} <+\infty,\\
& \| D_x^s \partial_x u\|_{L^\infty_x L^2_T}+\| D_t^{\frac{s}3} \partial_x u\|_{L^\infty_x L^2_T} <+\infty,\\
&  \| D_x^s u\|_{L^5_x L^{10}_T}+\| D_t^{\frac{s}3}  u\|_{L^5_x L^{10}_T} <+\infty,\\
& \left\| D_t^{\frac{s-s_p}3}D_x^{\frac1{10} -\frac2{5(p-1)}} D_t^{\frac3{10} -\frac6{5(p-1)}} u\right\|_{L^{r_p}_x L^{q_p}_T}<+\infty, \\
&  \frac1{r_p} =\frac{2}{5(p-1)} +\frac1{10}, \quad \frac1{q_p} = \frac{3}{10} -\frac4{5(p-1)}.
\ea
\ee
\end{lem}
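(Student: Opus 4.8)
The plan is to reduce \eqref{4gKdV}, via Duhamel's formula, to the fixed-point equation
\[
u(t) = W(t)u_0 - \int_0^t W(t-t')\,\partial_x\big(u^p\big)(t')\,dt', \qquad W(t):=e^{-t\partial_x^3},
\]
and to solve it by a contraction mapping argument in a complete metric space $X_T^s$ (a closed ball) whose norm is the sum of the mixed space-time quantities appearing in \eqref{finite_norm}: the sharp local smoothing norms of $L^\infty_xL^2_T$ type, the Strichartz-type norms of $L^5_xL^{10}_T$ type (for both $D_x$- and $D_t$-derivatives), the energy norm $\|\,\cdot\,\|_{L^\infty_TH^s_x}$, and the sharp maximal-function norm $\|D_t^{(s-s_p)/3}D_x^{1/10-2/(5(p-1))}D_t^{3/10-6/(5(p-1))}\,\cdot\,\|_{L^{r_p}_xL^{q_p}_T}$. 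The argument splits into three blocks: the linear estimates for $W$, the multilinear estimate on $\partial_x(u^p)$ in the dual norm, and the closing of the contraction together with its transfer to a genuine solution.

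First I would assemble the linear estimates for the Airy group: the sharp local smoothing estimate $\|\partial_xW(t)f\|_{L^\infty_xL^2_t}\lesssim\|f\|_{L^2}$ and its fractional-derivative versions; the Strichartz/smoothing estimates $\|D_x^{\alpha}W(t)f\|_{L^q_xL^r_t}\lesssim\|f\|_{L^2}$ for the admissible exponents relevant here; and the sharp maximal-function estimates of Kenig--Ponce--Vega in the scaling-critical $L^{r_p}_xL^{q_p}_T$ form with exactly the fractional orders displayed in \eqref{finite_norm}. Each of these must be upgraded to its retarded (inhomogeneous) version, done by duality and the Christ--Kiselev lemma whenever the time exponents are in the admissible order; in doing so one tracks the gain of a positive power $T^{\theta}$, $\theta=\theta(s,p)>0$, produced by Hölder in $t$ whenever $s>s_p$ — this is the mechanism that makes the scheme work for data of arbitrary size, with existence time $T=T(\|u_0\|_{H^s};s)$ and $T(\rho;s)\to0$ as $\rho\to0$.

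Next I would prove the nonlinear estimate: $\Phi(u):=W(t)u_0-\int_0^tW(t-t')\partial_x(u^p)dt'$ maps $X_T^s$ into itself and is a contraction. Since $p$ is an integer, $u^p$ can be expanded without the fully fractional Leibniz rule; the strategy is to place $p-1$ of the factors in the maximal-function norm (which controls the relevant $L^\infty$-in-one-variable quantity of $u$) and the remaining factor, carrying the derivatives $D_x^{s_p}\partial_x$, $D_x^s$, $D_t^{s/3}$, in a smoothing or Strichartz norm, using Hölder in $(x,t)$ and the Kato--Ponce fractional Leibniz rule to move the fractional derivatives onto a single factor. The exponent identities $\tfrac1{r_p}=\tfrac2{5(p-1)}+\tfrac1{10}$ and $\tfrac1{q_p}=\tfrac3{10}-\tfrac4{5(p-1)}$ are precisely tuned so that this Hölder bookkeeping balances at the critical regularity $s=s_p$; for $s>s_p$ one gains an additional $T^\theta$, giving $\|\Phi(u)\|_{X_T^s}\le C\|u_0\|_{H^s}+CT^{\theta}\|u\|_{X_T^s}^{p}$ and an analogous bound for $\|\Phi(u)-\Phi(v)\|_{X_T^s}$, which closes the contraction on a ball of radius $\sim\|u_0\|_{H^s}$ once $T$ is chosen small.

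The contraction principle then produces the unique fixed point $u\in X_T^s$, hence a strong solution in $C([-T,T],H^s)$ with all norms in \eqref{finite_norm} finite; uniqueness in the full class $C([-T,T],H^s)$ (not merely in the ball) follows by applying the same multilinear estimate to the difference of two solutions and absorbing it with the $T^\theta$ factor, and continuous dependence on $u_0$ is obtained by the same estimates on differences. The main obstacle is the sharp maximal-function estimate together with its matching in the nonlinear step: one must have the scaling-critical $L^{r_p}_xL^{q_p}_T$ bound with precisely the space-time fractional derivatives of \eqref{finite_norm}, and then check that the Hölder exponents genuinely close at $s=s_p$ so that all $p$ copies of $u$ land in norms controlled by $X_T^s$; this is the technical heart of the Kenig--Ponce--Vega analysis, the remaining steps being careful but routine bookkeeping. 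Full details are in \cite{KPV1} (see also \cite{FLP}).
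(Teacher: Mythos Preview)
Your proposal is correct and follows essentially the same route as the paper's sketch: Duhamel formulation, contraction in the sum of the mixed norms displayed in \eqref{finite_norm}, with the $p-1$ undifferentiated factors placed in the maximal-function norm $\beta_4^T$ and the differentiated factor in a smoothing/Strichartz norm. The only point the paper makes more explicit is the precise mechanism and value of your $T^\theta$: the gain comes from the Sobolev-in-time embedding $\beta_4^T(w)\le CT^{(s-s_p)/3}\beta_4^T(D_t^{(s-s_p)/3}w)$, giving $\theta=(p-1)(s-s_p)/3$, which is then used in the subsequent blow-up-rate lemma.
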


\begin{proof}[Sketch of proof]
In view of the forthcoming Lemma \ref{BU}, a sketch of proof is necessary. We follow \cite{KPV1}, and assume $u_0\in H^s$, $0\leq s<\frac32$ for simplicity to get Fourier transforms well-defined \cite{KPV1}.  Note that $u_0\in L^2 \cap \dot H^{s_p}$ as well, since $s>s_p$. One has the Duhamel's representation of the problem
\be\label{Duh}
u(t) =\Phi[u](t):= S(t) u_0 -\int_0^t S(t-s) \partial_x(u^p)(s)ds,
\ee
where $S(t)$ denotes the Airy free flow. Let $s\geq 0$ for the moment. Recall the values of $r_p$ and $q_p$ in \eqref{finite_norm}. Define
\be\label{norms}
\ba
& \beta_1^T(w):= \| w\|_{L^\infty_{[-T,T]}L^2_x}=: \| w\|_{L^\infty_T L^2_x},\\
&\beta_2^T(w):= \| \partial_x w\|_{L^\infty_x L^2_T}, \\
&\beta_3^T(w):= \| w\|_{L^5_x L^{10}_T}, \\
&\beta_4^T(w):= \|D_x^{\frac1{10}-\frac2{5(p-1)}} D_t^{\frac{3}{10}-\frac{6}{5(p-1)}}w\|_{L^{r_p}_x L^{q_p}_T}.
\ea
\ee
With the norms \eqref{norms}, the following estimates for \eqref{Duh}  are obtained (see Prop. 6.1 in \cite{KPV1}):
\[
\ba
 \beta_1^T(D_x^s\Phi[v]) \leq &~{} \|D_x^{s} S(t) u_0\|_{L^\infty_T L^2_x} + C\| D_x^{s} (v^p)\|_{L^1_xL^2_T} \\
 \leq &~{} \| u_0\|_{\dot H^{s}} + C \| v D_x^{s} (v^{p-1})\|_{L^1_xL^2_T}  + C \| v^{p-1} \|_{L_x^{\frac54}L_T^{\frac52}} \| D_x^{s}v \|_{L^5_xL^{10}_T} , \\
 \leq&~{}  \| u_0\|_{\dot H^{s}} + C \| v\|_{L_x^{\frac{5(p-1)}4}L_T^{\frac{5(p-1)}2}}  \| D_x^{s} (v^{p-1})\|_{L^{\frac{5(p-1)}{5(p-1)-4}}_xL^{\frac{5(p-1)}{5(p-1)-2}}_T}  \\
 &~{}  + C \| v \|_{L_x^{\frac{5(p-1)}4}L_T^{\frac{5(p-1)}2}}^{p-1} \| D_x^{s}v \|_{L^5_xL^{10}_T} , \\
 \leq &~{}  \| u_0\|_{\dot H^{s}} \\
 &~{} + C\left\|D_x^{\frac1{10}-\frac2{5(p-1)}} D_t^{\frac{3}{10}-\frac{6}{5(p-1)}}v\right\|_{L^{r_p}_x L^{q_p}_T}  \| v \|_{L^{\frac{5(p-1)}{4}}_xL^{\frac{5(p-1)}{2}}_T}^{p-2}  \| D_x^{s}v \|_{L^5_xL^{10}_T} \\
&~{} + C\left\|D_x^{\frac1{10}-\frac2{5(p-1)}} D_t^{\frac{3}{10}-\frac{6}{5(p-1)}}v\right\|_{L^{r_p}_x L^{q_p}_T}^{p-1} \| D_x^{s}v \|_{L^5_xL^{10}_T} , \\
 \leq &~{}  \| u_0\|_{\dot H^{s}} + C\left(\beta_4^T(v) \right)^{p-1} \beta_3^T(D_x^s v) .
\ea
\]
Similarly, following \cite[Prop 6.1 and Thm. 3.5]{KPV1}
\[
\ba
\beta_2^T(D_x^s \Phi[v]) \leq &~{} \|D_x^{s} \partial_x S(t) u_0\|_{L^\infty_x L^2_T} + C\| D_x^{s} (v^p)\|_{L^1_xL^2_T}\\
 \leq & ~{} C \| u_0\|_{\dot H^{s}} +  C\left(\beta_4^T(v) \right)^{p-1} \beta_3^T(D_x^s v).
\ea
\]
Third, from \cite[Corollary 3.8 and eqn. (6.9)]{KPV1}
\[
\ba
\beta_3^T(D_x^s \Phi[v]) \leq &~{}  \|D_x^{s} S(t) u_0\|_{L^5_x L^{10}_T} + C\| D_x^{s} \partial_x (v^p)\|_{L^{\frac54}_xL^{\frac{10}9}_T}\\
  \leq &~{} C \| u_0\|_{\dot H^{s}}  + C\left(\beta_4^T(v) \right)^{p-1} \beta_2^T(D_x^s v).
\ea
\]
However, notice that from \cite[Corollary 3.16]{KPV1} with $\alpha=\beta=0,$ and following the proof of \cite[eqn. (6.10)]{KPV1},
\[
\ba
\beta_4^T(\Phi[v]) \leq &~{}  C\| u_0\|_{\dot H^{s_p}} + C\| \partial_x(v^p)\|_{L^{r_p'}_xL_T^{q_p'}} \\
 \leq &~{}  C\| u_0\|_{\dot H^{s_p}} + C\left(\beta_4^T(v) \right)^{p-1}\left(\beta_3^T(D_x^{s_p}v) \right)^{1-s_p}\left(\beta_2^T(D_x^{s_p}v) \right)^{s_p}. 
\ea
\]
Similarly,
\[
\ba
\beta_4^T(D_t^{\frac{s-s_p}3}\Phi[v]) \leq &~{}  C\| u_0\|_{\dot H^{s}} + C\left(\beta_4^T(D_{t}^{\frac{s-s_p}3} v) \right)^{p-1}\left(\beta_3^T(D_x^{s}v) \right)^{1-s_p}\left(\beta_2^T(D_x^{s}v) \right)^{s_p}. 
\ea
\]
Finally, from \cite[eqn. (3.8) and Thm. A.6]{KPV1}
\[
\ba
 \beta_2^T(D_t^\frac{s}{3}\Phi[v]) \leq &~{}  \|D_t^{\frac{s}3} \partial_x S(t) u_0\|_{L^\infty_x L^2_T} + \left\|D_t^{\frac{s}3} \int_0^t S(t-t')\partial_x^2 (v^p) dt' \right\|_{L^\infty_x L^2_T} \\
 \leq &~{}  C\|u_0\|_{\dot H^s} +C \left\|D_t^{\frac{s}3} (v^p)  \right\|_{L^1_x L^2_T} \\
  \leq &~{}  C\|u_0\|_{\dot H^s} +C \left\| v^{p-1}  \right\|_{L^{\frac54}_x L^{\frac52}_T}  \left\| D_t^{\frac{s}3} v \right\|_{L^5_x L^{10}_T} \\
 \leq &~{}  C\|u_0\|_{\dot H^s} +C   \left(\beta_4^T(v) \right)^{p-1}  \beta_3^T(D_t^{\frac{s}3} v),
\ea
\]
and \cite[eqn. (3.34)]{KPV1}
\[
\ba
 \beta_3^T(D_t^\frac{s}{3}\Phi[v]) \leq  &~{} \|u_0\|_{\dot H^s} +  C\| D_x^s \partial_x (v^{p})\|_{L^{\frac54}_x L^{\frac{10}9}_T} +C \left\| D_t^\frac{s}{3} \partial_x (v^{p}) \right\|_{L^{\frac54}_x L^{\frac{10}9}_T} \\
 \leq  &~{} \|u_0\|_{\dot H^s} + C \left(\beta_4^T(v) \right)^{p-1} \beta_2^T(D_x^s v) . 
\ea
\]
Finally, using the Sobolev embedding,
\[
\left\|D_x^{\frac1{10}-\frac2{5(p-1)}} D_t^{\frac{3}{10}-\frac{6}{5(p-1)}}w\right\|_{L^{r_p}_x L^{q_p}_T} \leq C T^{\frac{s-s_p}{3}} \left\| D_t^{\frac{s-s_p}{3}} D_x^{\frac1{10}-\frac2{5(p-1)}} D_t^{\frac{3}{10}-\frac{6}{5(p-1)}}w\right\|_{L^{r_p}_x L^{q_p}_T}.
\]
Recall the exponent $(p-1)$ appearing in all the terms $\left(\beta_4^T(v) \right)^{p-1}$ above. This will produce the power $T^{(p-1)\frac{s-s_p}{3}}$ in the fixed point argument. From this point, establishing the local theory is completely similar to the work done in \cite{KPV1,FLP}.
\end{proof}

Notice that from the equality $s_p=\frac12-\frac{2}{p-1}$ it turns out that \eqref{4gKdV} will be always energy subcritical, that is to say, $H^1$ subcritical. Let us consider initial data $u_0\in H^s$, $s>s_p$, with $p\geq 6$. From Lemma \ref{LWP} there exist $T>0$ and a local in time strong solution to \eqref{4gKdV} in $C([-T,T],H^s(\mathbb R))$. Moreover, we can define the maximal lifespan of the solution to be the interval $(-T_*,T^*)\ni 0$, as it is standard in the literature. Having $s$ fixed, assume now that $T^* <+\infty$ is such that 
\be\label{BUs}
\lim_{t\uparrow T^*} \| u(t)\|_{H^s} =+\infty,
\ee
and $\sup_{t\in [0,T] } \| u(t)\|_{H^s} <+\infty$ for all $0<T<T^*$. Naturally from \eqref{mass} one has
\[
\| u(t) \|_{L^2} =\| u_0 \|_{L^2}, \quad t<T^*.
\]
Additionally, for $H^1$ data or smoother, one has the conservation of \eqref{energy} for times $t<T^*$.

\subsection{Minimal rate of blow up} Now we recall a well-known result concerning the minimal rate of blow up for solutions to \eqref{4gKdV}.

\begin{lem}\label{BU}
Assume $u_0\in H^s$, $s> s_p$ is such that the corresponding local solution $u\in C((-T_*,T^*), H^s(\mathbb R))$ satisfies \eqref{BUs}. Then there exists $C>0$ such that, for all $t<T^*$,
\be\label{BUR}
 \| u(t)\|_{H^s}\geq \frac{C}{(T^*-t)^{\frac{s-s_p}3}}.  
\ee
\end{lem}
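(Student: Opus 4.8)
\emph{The plan} is to obtain \eqref{BUR} as a soft consequence of the \emph{quantitative} local theory contained in (the proof of) Lemma~\ref{LWP}, via the standard ``restart-and-maximality'' scheme. The first and only substantive step is to extract from that proof the precise dependence of the existence time on the datum. Inspecting the fixed-point estimates for $\Phi$ in \eqref{Duh}, every nonlinear contribution is controlled by $(\beta_4^T(v))^{p-1}$ times one of the norms in \eqref{norms}, and the time-Sobolev embedding $\beta_4^T(w)\le CT^{\frac{s-s_p}{3}}\beta_4^T(D_t^{(s-s_p)/3}w)$ used there turns each such factor into $(T^{\frac{s-s_p}{3}}M)^{p-1}$, where $M:=\|w_0\|_{\dot H^{s}}+\|w_0\|_{\dot H^{s_p}}\lesssim\|w_0\|_{H^s}$ (the last bound because $\dot H^{s_p}$ interpolates between $L^2$ and $\dot H^s$). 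Hence $\Phi$ is a contraction on a ball of radius $\simeq M$ in the space with norms \eqref{norms} as soon as the scaling-critical quantity $T^{(s-s_p)/3}M$ is below an absolute constant $\delta_0=\delta_0(s,p)>0$. In particular there is $c=c(s,p)>0$ such that, for every $w_0\in H^s$, the solution of \eqref{4gKdV} with datum $w_0$ is defined and continuous with values in $H^s$ on $[-\tau,\tau]$ with
\[
\tau \;:=\; c\,\|w_0\|_{H^s}^{-\frac{3}{s-s_p}} .
\]

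\emph{Second step.} Fix $t\in(-T_*,T^*)$ and apply the previous bound with datum $w_0=u(t)\in H^s$; by uniqueness the resulting solution coincides with $u$, so $u$ is defined on $[t,\,t+\tau_t]$ with $\tau_t=c\,\|u(t)\|_{H^s}^{-3/(s-s_p)}$, and $\|u(\cdot)\|_{H^s}$ is finite on that closed interval. If $t+\tau_t>T^*$ held, then $T^*$ would be an interior point of an interval on which $u$ is a well-defined $H^s$ solution with $\|u(\cdot)\|_{H^s}$ bounded, contradicting the blow-up hypothesis \eqref{BUs}. Therefore $t+\tau_t\le T^*$ for every $t<T^*$, that is,
\[
c\,\|u(t)\|_{H^s}^{-\frac{3}{s-s_p}} \;\le\; T^*-t .
\]
Rearranging yields \eqref{BUR} with $C=c^{\frac{s-s_p}{3}}$; no case distinction in $t$ is needed, and the inequality is genuinely informative near $T^*$, where $\|u(t)\|_{H^s}\to\infty$.

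\emph{Expected difficulty.} The whole argument is essentially bookkeeping: the only point requiring care is verifying in the first step that, after assembling all the estimates from the proof of Lemma~\ref{LWP}, the existence time and the $H^s$ size of the datum enter \emph{only} through the homogeneous combination $T^{(s-s_p)/3}\|w_0\|_{H^s}$ --- this is exactly the scaling-critical structure already flagged after \eqref{norms} (``this will produce the power $T^{(p-1)\frac{s-s_p}{3}}$ in the fixed point argument''). Once that is granted, the restart, the use of uniqueness, and the contradiction with \eqref{BUs} are all routine.
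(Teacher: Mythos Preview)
Your proof is correct and follows essentially the same approach as the paper's: both extract from the proof of Lemma~\ref{LWP} that the contraction works on an interval of length $\tau\gtrsim\|u(t)\|_{H^s}^{-3/(s-s_p)}$ (equivalently, that the fixed-point condition $c\|u(t)\|_{H^s}+c\tau^{(p-1)(s-s_p)/3}M^p\le M$ can be satisfied), and then use maximality/blow-up at $T^*$ to force $\tau\le T^*-t$. Your write-up is a slightly cleaner packaging of the same ``restart-and-maximality'' argument; there is no substantive difference.
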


Notice that this is the blow up rate also predicted by the self-similar scaling.

\begin{proof}[Sketch of proof]
Let $s>s_p$. From a classical argument involving the local well-posedness of the model, in view of \eqref{finite_norm} and the proof of Lemma \ref{LWP}, if $t<T^*$, for all $T<T^*$, one has the following consequence: if for all $M>0$, one has $c\|u(t)\|_{H^s} +c (T-t)^\gamma M^{p} \leq M$, with $\gamma=(p-1)\frac{(s-s_p)}3,$ then $c\|u(t)\|_{H^s} +c (T^*-t)^\gamma M^{p} > M$, for all $M>0$. Choosing $M=2c\|u(t)\|_{H^s}$, it follows that $ (T^*-t)^{\frac{(s-s_p)}3}\| u(t)\|_{H^s}\geq C$, leading to \eqref{BUR}.
\end{proof}

\section{Decay in far field regions}\label{3}

In this section we prove \eqref{FF1}-\eqref{FF2} and \eqref{FF3}-\eqref{FF4}. First of all, we prove \eqref{FF1} and \eqref{FF3}. 

\begin{lem}
There exists $C_0>0$ such that  
\be\label{right1}
\lim_{t\uparrow T^*} \|u(t)\|_{L^2(x\geq  \beta_1(t) )} =0, \quad \hbox{with} \quad \beta_1(t) := C_0 \left( t + \int_0^t \| \partial_x u(s)\|_{L^2}^{\frac{p-1}2} ds \right).
\ee
\end{lem}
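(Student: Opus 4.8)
My plan is to prove \eqref{right1} by a travelling-weight monotonicity (Kato-type) estimate whose only ingredients are the conserved mass \eqref{mass} and the pointwise bound \eqref{GN3}; no upper bound on $\|\partial_x u(t)\|_{L^2}$ is used, only its appearance as a propagation speed. Fix a smooth nondecreasing $\psi\colon\R\to[0,1]$ with $\psi(y)\to0$ as $y\to-\infty$, $\psi(y)\to1$ as $y\to+\infty$, $\psi'\ge0$, and $|\psi'''|\le C_\psi\psi'$ (a rescaled $\tfrac12(1+\tanh)$ works; the last bound is a one-line computation). For a speed $\rho=\rho(t)$ to be fixed later, set $\mathcal I(t):=\int\psi(x-\rho(t))\,u^2(t,x)\,dx$, so $0\le\mathcal I(t)\le\|u_0\|_{L^2}^2$.

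First I would derive the virial identity. For smooth solutions (the general case follows by approximation in $H^s$, $s>s_p$), differentiating in $t$, inserting \eqref{4gKdV} and integrating by parts repeatedly yields
\[
\frac{d}{dt}\mathcal I(t)=-3\!\int(\partial_x u)^2\psi'+\int u^2\psi'''+\frac{2p}{p+1}\int u^{p+1}\psi'-\rho'(t)\!\int u^2\psi',
\]
with all weights evaluated at $x-\rho(t)$. The first term is $\le0$ and the third is the nonlinear flux. Since $\psi'\ge0$, by \eqref{GN3} and then \eqref{mass},
\[
\frac{2p}{p+1}\int u^{p+1}\psi'\le C\|u(t)\|_{L^\infty}^{p-1}\!\int u^2\psi'\le C\|u_0\|_{L^2}^{\frac{p-1}2}\|\partial_x u(t)\|_{L^2}^{\frac{p-1}2}\!\int u^2\psi',
\]
while $\int u^2\psi'''\le C_\psi\int u^2\psi'$. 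Hence, choosing $\rho'(t):=C_1\big(1+\|\partial_x u(t)\|_{L^2}^{(p-1)/2}\big)$ with $C_1=C_1(p,\|u_0\|_{L^2})$ large enough, the last three terms sum to a nonpositive quantity, so $\mathcal I$ is non-increasing on $[0,T^*)$. Note that the admissible propagation speed is governed by exactly the combination $\|\partial_x u(t)\|_{L^2}^{(p-1)/2}$ appearing in $\beta_1$.

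Next I would convert this into \eqref{right1}. Given $\eps>0$, since $\psi(\cdot-y)\to0$ pointwise as $y\to+\infty$ with $0\le\psi\le1$ and $u_0\in L^2$, dominated convergence gives $y_\eps$ with $\int\psi(x-y_\eps)u_0^2\,dx<\eps$. Running $\mathcal I$ with $\rho(0)=y_\eps$, monotonicity forces $\mathcal I(t)<\eps$ for all $t<T^*$; since $\psi\ge c_0>0$ on $[1,\infty)$ this yields $\int_{x\ge\rho(t)+1}u^2(t,x)\,dx\le c_0^{-1}\eps$, where $\rho(t)=y_\eps+C_1\int_0^t\big(1+\|\partial_x u(s)\|_{L^2}^{(p-1)/2}\big)\,ds$. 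Taking $C_0\ge C_1+1$ in the definition of $\beta_1$, the inequality $\rho(t)+1\le\beta_1(t)$ reduces to $t+\int_0^t\|\partial_x u(s)\|_{L^2}^{(p-1)/2}ds\ge y_\eps+1$, which holds on a left neighbourhood of $T^*$ provided $t+\int_0^t\|\partial_x u(s)\|_{L^2}^{(p-1)/2}ds\to\infty$ as $t\uparrow T^*$. For $T^*=+\infty$ this is automatic; for $T^*<\infty$ it follows from the minimal blow-up rate of Lemma \ref{BU}, which makes $\|\partial_x u(t)\|_{L^2}$ diverge with a quantitative rate near $T^*$. Thus $\limsup_{t\uparrow T^*}\|u(t)\|_{L^2(x\ge\beta_1(t))}^2\le c_0^{-1}\eps$ for every $\eps>0$, i.e. \eqref{right1}.

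The computation itself is routine; the delicate part is the joint design of the weight $\psi$ and the speed $\rho$. One needs $\psi$ so that $\int u^2\psi'''$ is absorbed by $\int u^2\psi'$, the flux $\int u^{p+1}\psi'$ bounded purely in terms of the conserved mass and $\|\partial_x u(t)\|_{L^2}^{(p-1)/2}$ (there being no a priori $H^1$ bound, even for global solutions), and $\rho$ whose integral is a fixed multiple of $\beta_1(t)$ while $\beta_1(t)\to\infty$ as $t\uparrow T^*$---this last point being exactly where Lemma \ref{BU} is essential in the finite-time case, and the conceptual novelty over the subcritical setting.
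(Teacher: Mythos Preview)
Your argument is the same Kato monotonicity as the paper's: a travelling $\tfrac12(1+\tanh)$-type weight, the standard virial identity, and the bound on the nonlinear flux via \eqref{GN3} producing exactly the speed $\|\partial_x u(t)\|_{L^2}^{(p-1)/2}$. The only organizational difference is that the paper uses a two-parameter weight $\varphi_{t_0}$ centred at $\tfrac12(\beta(t)+\beta(t_0))$ (the Martel--Merle device), so that after comparing $\mathcal J_{t_0}(t_0)\le \mathcal J_{t_0}(2)$ and sending $t_0\to T^*$, the right-hand side is evaluated at the fixed time $t=2$ with a shift running off to infinity; you instead fix an initial shift $y_\eps$ depending on $\eps$ and use monotonicity forward in time. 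These are equivalent formulations of the same estimate.

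One caveat on your treatment of the finite-time case: your appeal to Lemma~\ref{BU} to force $\beta_1(t)\to\infty$ is not sufficient for small supercritical powers. The minimal rate yields $\|\partial_x u(t)\|_{L^2}^{(p-1)/2}\gtrsim (T^*-t)^{-(p+3)/12}$, which is \emph{integrable} on $[0,T^*)$ for $p\in\{6,7,8\}$; hence the lower bound alone does not make $\int_0^{T^*}\|\partial_x u(s)\|_{L^2}^{(p-1)/2}\,ds$ diverge, and $\beta_1(t)$ could in principle stay bounded. The paper's own proof handles $T^*<\infty$ only by declaring it ``completely similar'' and does not address this point either, so this is a lacuna you share with the original rather than a flaw in your method.
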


\begin{remark}
The choice of $\beta_1(t)$ is in some sense optimal. Indeed, in the case where $T^*=+\infty$ and $\sup_{t\in\mathbb R_+}\| \partial_x u(s)\|_{L^2}^{\frac{p-1}2} <+\infty$, one recovers the classical bound $\beta_1(t) \gtrsim t$ depending on the size of the solution.
\end{remark}

\begin{proof} This proof is inspired in Martel and Merle's work \cite{MMnon}. First of all, assume $T^* =+\infty$. Define $\beta_1(t)$ as in \eqref{right1}, with $C_0>0$ to be determined below. For times $ t,t_0 \geq 2>0$, and $L>0$ large enough, set
\be\label{def_varphi0}
\varphi_{t_0}(t,x):= \frac12\left( 1+ \tanh\left(\frac{x -\beta_1(t_0) - \frac12(\beta_1(t)- \beta_1(t_0))}{L}  \right) \right).
\ee
Notice that for $t,x$ fixed, $\lim_{t_0\to +\infty}\varphi_{t_0}(t,x) =0$. Using $\varphi_{t_0}$ as before, consider
\[
\mathcal J_{t_0}(t):=\frac12 \int u^2(t,x) \varphi_{t_0}(t,x)dx.
\]
We have
\[
\frac{d}{dt}\mathcal J_{t_0}(t) =   \frac12 \int \partial_t \varphi_{t_0} u^2 -\frac32\int \partial_x \varphi_{t_0} (\partial_x u)^2 +\frac12\int \partial_x^3 \varphi_{t_0} u^2 +\frac{p}{p+1} \int\partial_x \varphi_{t_0} u^{p+1}.
\]
We first notice that $\partial_x \varphi_{t_0}>0$ and
\[
\frac12 \int \partial_t \varphi_{t_0} u^2 =-\frac12 \beta_1'(t)\int \partial_x \varphi_{t_0} u^2.
\]
Second, 
\[
\left| \frac12\int \partial_x^3 \varphi_{t_0} u^2 \right| \leq \frac{2}{L^2}  \int \partial_x \varphi_{t_0} u^2.
\]
Notice that from \eqref{GN3},
\[
\ba
\left| \frac{p}{p+1} \int\partial_x \varphi_{t_0} u^{p+1} \right| \leq &~{} \frac{p}{p+1} \|u(t)\|_{L^\infty}^{p-1} \int\partial_x \varphi_{t_0} u^2\\
 \leq &~{} C \| \partial_x u(t)\|_{L^2}^{\frac{p-1}2} \int\partial_x \varphi_{t_0} u^2.
\ea
\]
We conclude that 
\[
\frac{d}{dt}\mathcal J_{t_0}(t) \leq  -\left( \frac12 \beta_1'(t) -C \| \partial_x u(t)\|_{L^2}^{\frac{p-1}2} - \frac{2}{L^2} \right)\int \partial_x \varphi_{t_0} u^2 -\frac32\int \partial_x \varphi_{t_0} (\partial_x u)^2 .
\]
By the choice of $\beta_1(t)$, for some $C_0>0$ sufficiently large, we obtain
\[
\frac{d}{dt}\mathcal J_{t_0}(t) \leq  0.
\]
Consequently, for $t_0\geq 2$, $\mathcal J_{t_0}(t_0) \leq \mathcal J_{t_0}(2)$. We have
\be\label{caso_base}
\ba
& \int u^2(t_0,x) \left( 1+ \tanh\left(\frac{x -\beta_1(t_0)}{L}  \right) \right) dx\\
&\quad  \leq \int u^2(2,x)\left( 1+ \tanh\left(\frac{x - \frac12 (\beta_1(t_0) +\beta(2))}{L}  \right) \right) dx.
\ea
\ee
Sending $t_0\to +\infty,$ we conclude \eqref{right1}  in the case $T^*=+\infty$.

The proof of  \eqref{right1} in the case $T^*<+\infty$ is completely similar, with some minor differences. First of all, let $0<t_1\leq t_0,t <T^*$. Define $\varphi_{t_0}$ as in \eqref{def_varphi0}. Now follow the lines of the proof in the previous case, up to \eqref{caso_base}, to obtain
\be\label{cota_externa}
\ba
& \int u^2(t_0,x) \left( 1+ \tanh\left(\frac{x -\beta_1(t_0)}{L}  \right) \right) dx\\
&\quad  \leq \int u^2(t_1,x)\left( 1+ \tanh\left(\frac{x - \frac12 (\beta_1(t_0) +\beta_1(t_1))}{L}  \right) \right) dx.
\ea
\ee
 If $\lim_{t_0\uparrow T^*} \beta_1(t_0) =+\infty$, then the proof is very similar to the previous case. If now $\lim_{t_0\uparrow T^*} \beta_1(t_0) <+\infty$, given $\varepsilon>0$, choosing $C_0(\varepsilon, \| u(t_1)\|_{L^2})>0$ large enough, we will obtain 
\[
\ba
& \limsup_{t\uparrow T^*}\int u^2(t_0,x) \left( 1+ \tanh\left(\frac{x -\beta_1(t_0)}{L}  \right) \right) dx\\
&\quad  \leq\limsup_{t\uparrow T^*} \int u^2(t_1,x)\left( 1+ \tanh\left(\frac{x - \frac12 (\beta_1(t_0) +\beta_1(t_1))}{L}  \right) \right) dx<\varepsilon.
\ea
\]
\end{proof}

\subsection{Decay in left far field region}

The purpose of this subsection is to show strong $L^2$ decay in the far left region, namely, the proof of \eqref{FF2} and \eqref{FF4}. In recent works \cite{MPS,MMPP1,MMPP2}, a similar result was proved but only in finite portions of this region. Here we improve that result by considering the whole left far region at once. This is done by taking a suitable modification of the weight function taken in \cite{MPS}.

\begin{lem}\label{lejos_izquierda}
Assume that $T^*=+\infty$. For any $C_1>0$ and $\eta>0$, one has
\be\label{left1}
\lim_{t\to +\infty} \|u(t)\|_{L^2(x\leq - \beta_2(t))} =0, \quad \beta_2(t):= C_1  \left(1 + \| \partial_x u(t)\|_{L^2}^{\frac{p-1}2} \right) t \log^{1+\eta}t.
\ee
\end{lem}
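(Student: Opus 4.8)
The idea is to run a virial/monotonicity argument analogous to the right far-field case, but now with a cutoff that moves to the \emph{left} and whose profile is chosen with enough room (the extra logarithmic factor) to absorb the error terms. Fix $\eta>0$, $C_1>0$, and a large length scale $L$. Reversing the spatial orientation, I want a weight $\psi_{t_0}(t,x)$ that equals $\approx 1$ for $x\ll -\beta_2(t)$ and decays as $x$ increases, with $\partial_x\psi_{t_0}\le 0$, and whose moving center tracks $-\beta_2(t_0)-\tfrac12(\beta_2(t_0)-\beta_2(t))$ (so that as $t_0\to\infty$ the weight evaluated at any fixed $(t,x)$ tends to $0$). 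Concretely one can take, as in \cite{MPS} with a modified profile,
\[
\psi_{t_0}(t,x):=\frac12\Big(1-\tanh\Big(\frac{x+\beta_2(t_0)+\tfrac12(\beta_2(t)-\beta_2(t_0))}{L}\Big)\Big),
\]
and set $\mathcal K_{t_0}(t):=\tfrac12\int u^2(t,x)\,\psi_{t_0}(t,x)\,dx$.

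**Key steps.** Differentiating in time and using \eqref{4gKdV},
\[
\frac{d}{dt}\mathcal K_{t_0}(t)=\frac12\int\partial_t\psi_{t_0}\,u^2+\frac32\int\partial_x\psi_{t_0}\,(\partial_x u)^2-\frac12\int\partial_x^3\psi_{t_0}\,u^2-\frac{p}{p+1}\int\partial_x\psi_{t_0}\,u^{p+1}.
\]
Here $\partial_x\psi_{t_0}\le0$, so the $(\partial_x u)^2$ term is $\le 0$ and is the good term; the time-derivative term contributes $+\tfrac12\beta_2'(t)\int|\partial_x\psi_{t_0}|\,u^2$ \emph{with the wrong sign}, i.e.\ it is the dangerous term that the others must beat. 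As before, $|\partial_x^3\psi_{t_0}|\lesssim L^{-2}|\partial_x\psi_{t_0}|$, and by \eqref{GN3} the nonlinear flux is bounded by $C\|\partial_x u(t)\|_{L^2}^{(p-1)/2}\int|\partial_x\psi_{t_0}|\,u^2$. Collecting terms,
\[
\frac{d}{dt}\mathcal K_{t_0}(t)\le\Big(\tfrac12\beta_2'(t)+C\|\partial_x u(t)\|_{L^2}^{\frac{p-1}2}+\tfrac{2}{L^2}\Big)\int|\partial_x\psi_{t_0}|\,u^2.
\]
Unlike the right case this is not automatically $\le0$, so instead I integrate on $[t_0,T]$ and use that $\int|\partial_x\psi_{t_0}|\,u^2\le \tfrac{C}{L}\|u_0\|_{L^2}^2$ by mass conservation: $\mathcal K_{t_0}(T)\le\mathcal K_{t_0}(t_0)+\tfrac{C}{L}\|u_0\|_{L^2}^2\int_{t_0}^{T}\big(\beta_2'(s)+\|\partial_x u(s)\|_{L^2}^{\frac{p-1}2}+1\big)ds$. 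The point of the weight $\beta_2(t)=C_1(1+\|\partial_x u(t)\|_{L^2}^{(p-1)/2})t\log^{1+\eta}t$ is that $\beta_2(t)/t\log^{1+\eta}t\gtrsim 1+\|\partial_x u(t)\|_{L^2}^{(p-1)/2}$, so the cutoff recedes \emph{faster} than the cumulative error grows; one then reverses roles, evaluating $\mathcal K_{t_0}$ at a time $t<t_0$ and letting $t_0\to\infty$ so that $\mathcal K_{t_0}(t)\to\tfrac12\|u(t)\|_{L^2}^2$ restricted to $\{x\le-\beta_2(t)\}$ up to harmless tails, while the right-hand side at the reference time stays controlled.

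**Main obstacle.** The delicate point is the bookkeeping of the \emph{dangerous} time-derivative term, which now has the bad sign: the weight's center moves left at speed $\sim\beta_2'(t)$, and $\beta_2'(t)$ involves the (possibly badly behaved, non-monotone) derivative of $\|\partial_x u(t)\|_{L^2}^{(p-1)/2}$. One must set up the comparison so that it is $\beta_2(t)$ itself — not $\beta_2'(t)$ — that appears after integrating in $t$, i.e.\ integrate by parts in time in $\int\partial_t\psi_{t_0}u^2$ or, more simply, design $\psi_{t_0}$ so that the combination $\tfrac12\partial_t\psi_{t_0}+(\text{error})\,|\partial_x\psi_{t_0}|$ has a sign once one chooses the constant $C_1$ large relative to the absolute constant $C$ from \eqref{GN3} and takes $L=L(t_0)\to\infty$ slowly; the extra $\log^{1+\eta}t$ is exactly the slack needed to make $\int^\infty dt/(t\log^{1+\eta}t)<\infty$, so that the accumulated error from the tail of the weight is finite and can be sent to zero. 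The case $T^*<\infty$ (needed for \eqref{FF2}) is handled by the same computation with $s=T^*-t$ and the rate from Lemma \ref{BU} ensuring the analogous integrability, which is why $(T^*-t)|\log^{1+\eta}(T^*-t)|$ replaces $t\log^{1+\eta}t$ there.
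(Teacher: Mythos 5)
Your overall architecture (moving cutoff, Kato identity, \eqref{GN3} for the nonlinear flux, sending $t_0\to\infty$ at the end) is the right family of ideas, but two of your structural claims are wrong, and the device the paper's proof actually hinges on is missing. First, the sign bookkeeping: the Kato identity does not change when you flip the orientation of the cutoff (and $x\mapsto-x$ alone is not a symmetry of \eqref{4gKdV}), so with $\partial_x\psi_{t_0}\le0$ the gradient flux enters as $-\frac32\int\partial_x\psi_{t_0}(\partial_x u)^2\ge0$: it is the \emph{bad} term, not the good one, and it has size $\sim\|\partial_x u(t)\|_{L^2}^2/L$. With a fixed width $L$ (or $L=L(t_0)$ growing slowly, untied to $u$) this is not integrable in time, because in the supercritical regime $\|\partial_x u(t)\|_{L^2}$ may be unbounded and there is no a priori control of $\int^T\|\partial_x u(s)\|_{L^2}^2\,ds$; taking $C_1$ large does not help, since $C_1$ only affects the speed of the center, not $\sup|\partial_x\psi_{t_0}|$. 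Conversely, the term you single out as dangerous, $\frac12\int\partial_t\psi_{t_0}\,u^2$, has the favorable sign whenever the center speed is nonnegative (your center moves left, so $\partial_t\psi_{t_0}\le0$ pointwise); the only genuine issue there is that $t\mapsto\|\partial_x u(t)\|_{L^2}$ need not be monotone or differentiable, and the paper handles this by replacing $\beta_2$ with a smooth increasing majorant $\mu(t)\ge\frac12 C_1\bigl(1+\|\partial_x u(t)\|_{L^2}^{(p-1)/2}\bigr)t\log^{1+\eta}t$ and concluding for the region $x\le-2\mu(t_1)$, not by integration by parts in time or a largeness choice of $C_1$.

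Second, and decisively: the paper does not use a fixed transition width at all. It takes $\varphi(t,x)=\chi\bigl(\bigl(x+\tfrac12(\mu(t)+\mu(t_1))\bigr)/\mu(t)\bigr)$ as in \eqref{chi}, so the transition region itself has width $\mu(t)$. Then $|\partial_x\varphi|\lesssim1/\mu(t)$, $|\partial_x^3\varphi|\lesssim1/\mu^3(t)$, and, since $p\ge5$ gives $\|\partial_x u\|_{L^2}^2\lesssim1+\|\partial_x u\|_{L^2}^{(p-1)/2}$, every error term---the badly signed gradient flux, the third-derivative term, and the nonlinear flux estimated via \eqref{GN3}---is bounded by $C/(t\log^{1+\eta}t)$, hence integrable; this is where the logarithmic factor is used, not to outrun the center's speed. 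Your integrated inequality instead leaves on the right-hand side $\mathcal K_{t_0}(t_0)$, which is essentially the quantity to be proved small (at time $t_0$ your weight is centered at $-\beta_2(t_0)$), plus $\frac CL\int_{t_0}^T\bigl(\beta_2'+\|\partial_x u\|_{L^2}^{(p-1)/2}+1\bigr)ds$, which can be arbitrarily large; the heuristic that ``the cutoff recedes faster than the cumulative error grows'' compares the center's position with an integrated error and makes neither term small. As written the argument does not close; it becomes the paper's proof once the width is promoted from $L$ to $\mu(t)$, the signs are corrected, and $\beta_2$ is replaced by a smooth increasing majorant.
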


\begin{remark}
The choice of $\beta_2(t)$ is in some sense optimal. Indeed, in the case where $T^*=+\infty$ and $\sup_{t\in\mathbb R_+}\| \partial_x u(s)\|_{L^2}^{\frac{p-1}2} <+\infty$, one recovers the classical bound $\beta_2(t) \sim t  \log^{1+\eta}t$ already found in previous works by the authors. 
\end{remark}


\begin{proof}[Proof of Lemma \ref{lejos_izquierda}]
Let $C_1>0$ and $\eta>0$ be fixed. Consider any $\varepsilon>0$ and $t_1>t_0>2$ sufficiently large such that  
\be\label{small_epsilon}
C(C_1) \int_{t_0}^{+\infty} \frac{dt}{t \log^{1+\eta} t} < \varepsilon.
\ee
Here $C(C_1)$ is a fixed constant that will be explicit below. We will also need a suitable cut-off function $\chi$, satisfying
\be\label{chi}
\begin{cases}
\chi\in C^\infty(\R), \quad 0\leq \chi \leq 1 \quad \hbox{ in}\quad \R, \\
\chi(s) \equiv 1 \quad \hbox{if}\quad s\leq -1, \quad  \chi(s) \equiv 0 \quad \hbox{if}\quad s\geq 0, \\
\chi' (s) <0, \quad \hbox{in}\quad (-1,0),\\
|\chi^{(k)} (s)|\leq 2^k, \quad \hbox{in}\quad (-1,0), \quad k=1,2,3.
\end{cases}
\ee
Let $t\geq 3.$ Let $\mu(t)$ be a smooth increasing function to be defined later. For $\chi$ as in \eqref{chi}, let
\[
\varphi(t,x):= \chi\left( \frac{x+\frac12(\mu(t)+ \mu(t_1))}{\mu(t)}\right).
\]
For further consideration, we have 
\be\label{derivative_t_chi}
\begin{aligned}
\partial_t \varphi(t,x) = &~{}  -\frac{\mu'(t)}{\mu(t)} \left(\frac{x+\frac12(\mu(t)+ \mu(t_1))}{\mu(t)} \right)\chi'\left( \frac{x+\frac12(\mu(t)+ \mu(t_1))}{\mu(t)}\right) \\
&~{} +\frac12 \frac{\mu'(t)}{\mu(t)}\chi'\left( \frac{x+\frac12(\mu(t)+ \mu(t_1))}{\mu(t)}\right),
\end{aligned}
\ee
and for $k=1,2,3$,
\be\label{derivative_x_chi}
\partial_x^{k} \varphi(t,x) = \frac1{\mu^{k}(t)} \chi^{(k)}\left( \frac{x+\frac12(\mu(t)+ \mu(t_1))}{\mu(t)}\right).
\ee
On the other hand, for any $0<t<+\infty$ we have
\[
\frac{d}{dt} \mathcal J(t)  =   \frac12 \int \partial_t \varphi u^2 -\frac32\int \partial_x \varphi (\partial_x u)^2 +\frac12\int \partial_x^3 \varphi u^2 +\frac{p}{p+1} \int\partial_x \varphi u^{p+1}.
\]
Now we estimate the small terms in the previous identity. First, using \eqref{derivative_x_chi} and \eqref{chi}, 
\[
\left| \frac32\int \partial_x \varphi (\partial_x u)^2 \right| \lesssim \frac{1}{\mu(t)} \int (\partial_x u)^2 \lesssim \frac{\| \partial_x u(t)\|_{L^2}^2}{\mu(t)}.
\] 
Similarly,
\[
\left| \frac12\int \partial_x^3 \varphi u^2 \right| \lesssim \frac{1}{\mu^3(t)} \int u^2 \lesssim \frac{1}{\mu^3(t)},
\] 
and from \eqref{GN3},
\[
\ba
\left| \frac{p}{p+1}\int \partial_x \varphi u^{p+1} \right| \leq &~{} \frac{2p}{(p+1)\mu(t)} \|u(t)\|_{L^\infty}^{p-1} \int u^2 \\
\lesssim &~{} \frac{1}{\mu(t)} \| \partial_x u(t)\|_{L^2}^{\frac{p-1}2}.
\ea
\] 
Now we have two cases:
\begin{itemize}
\item Case $\sup_{t\geq 0} \| \partial_x u(t)\|_{L^2} <+\infty$. 
\item Case $\limsup_{t\to +\infty} \| \partial_x u(t)\|_{L^2} =+\infty$.
\end{itemize}
In the first case, we simply choose 
\be\label{mu_acotado}
\mu(t) := \frac12C_1 t\log^{1+\eta} t.
\ee
Since $\chi'$ is nonzero on $[-1,0]$, we have from \eqref{derivative_t_chi} $\partial_t \varphi(t,x)  \leq 0$, 
\[
\frac{d}{dt} \mathcal J(t)  \lesssim \frac{1}{t\log^{1+\eta} t}.
\]
Integrating in $[t_0,t_1]$, and using \eqref{small_epsilon}, we obtain
\[
\mathcal J(t_1) \leq \mathcal J(t_0) + \frac12C(C_1) \int_{t_0}^{t_1} \frac{1}{t\log^{1+\eta} t} < \mathcal J(t_0) + \frac 12\varepsilon.
\]
We first conclude that 
\[
\int u^2(t_1,x)\chi\left( \frac{x+\mu(t_1)}{\mu(t_1)}\right)dx \leq  \int u^2(t_0,x)\chi\left( \frac{x+\frac12(\mu(t_0) + \mu(t_1))}{\mu(t_0)}\right) dx +\varepsilon,
\]
and noticing that in the region $\frac{x+\mu(t_1)}{\mu(t_1)} \leq -1$ one has $\chi\equiv 1$,
\[
\int_{x<-2\mu(t_1)} u^2(t_1,x) dx \leq \int u^2(t_0,x)\chi\left( \frac{x+\frac12(\mu(t_0) + \mu(t_1))}{\mu(t_0)}\right) dx +\varepsilon.
\]
Sending $t_1$ to infinity, we get
\[
\ba
& \limsup_{t_1\to +\infty}\int_{x<-2\mu(t_1)} u^2(t_1,x) dx \\
&\quad \leq \lim_{t_1\to +\infty}\int u^2(t_0,x)\chi\left( \frac{x+\frac12(\mu(t_0) + \mu(t_1))}{\mu(t_0)}\right) dx +\varepsilon =\varepsilon.
\ea
\]
Since $\varepsilon>0$ is arbitrary, we conclude that 
\[
\lim_{t\to +\infty}\int_{x<- C_1 t \log^{1+\eta}t} u^2(t,x) dx =0.
\]
This proves \eqref{left1} in the first case after choosing $\beta_2(t):= 2\mu(t).$ 

In the second case, let $\mu(t)$ be smooth positive, unbounded and increasing such that 
\[
\mu(t) \geq \frac12C_1  t \log^{1+\eta}t \left(1 + \| \partial_x u(t)\|_{L^2}^{\frac{p-1}2} \right).
\]
 Then one can see that the previous argument in the case $\| \partial_x u(t)\|_{L^2}^{\frac{p-1}2}$ holds, obtaining 
\[
\frac{d}{dt} \mathcal J(t)  \lesssim \frac{1}{t\log^{1+\eta} t}.
\]
The rest of the proof is similar. This ends the proof of \eqref{left1} in the remaining case, after choosing $\beta_2(t):= 2\mu(t).$
\end{proof}

Now we consider the case where $T^*<+\infty$.

\begin{lem}\label{lejos_izquierda 2}
Assume that $T^*<+\infty$ and that $u$ in Theorem \ref{MT2} satisfies the lower bound blow up rate
\be\label{cond_rara}
\|\partial_x u(t)\|_{L^2} \geq \frac{c_0}{(T^*-t)^{\frac4{3(p-1)}}},
\ee
for some $c_0>0.$ Then, for any $C_1>0$, $\eta>0$, and  $\beta_2(t)$ smooth such that $\beta_2(t) \geq C_1\|\partial_x u(t)\|_{L^2}^{\frac{p-1}{2}} (T^*-t) |\log^{1+\eta}(T^*-t)|$, one has
\be\label{left22}
\ba
& \lim_{t\uparrow T^*} \|u(t)\|_{L^2(x\leq -   \beta_2(t) )} =0.
\ea
\ee
\end{lem}

Note that in the case $T^*=+\infty$ and $\sup_{t\geq 0} \| \partial_x u(t)\|_{L^2} <+\infty$ we recover the choice of $\mu(t)$ \eqref{mu_acotado}, after the suitable change in the time variable.

\begin{remark}\label{rem_clave}
From \eqref{BUR}, in the case $s=1$ we get 
\[
\|\partial_x u(t)\|_{L^2} \geq \frac{c_0}{(T^*-t)^{\frac13(1-s_p)}} =\frac{c_0}{(T^*-t)^{\frac{p+3}{6(p-1)}}} \gtrsim \frac{1}{(T^*-t)^{\frac4{3(p-1)}}}
\]
 for all $p \geq 5$. Therefore, \eqref{cond_rara} is satisfied in the critical case (with a suitable constant), and in the supercritical case $p>5$ for any possible constant $c_0$ if $t$ is sufficiently close to $T^*$.
\end{remark}

\begin{proof}[Proof of Lemma \ref{lejos_izquierda 2}] 
We follow the proof of Lemma \ref{lejos_izquierda} with some key modifications. First of all,  let $C_1>0$ and $\eta>0$ be fixed as previously mentioned. With no loss of regularity, we can assume $T^*>1$, otherwise we redefine some logarithms below. Consider any $\varepsilon>0$ and $1<t_0<T^*$ sufficiently close to $T^*$ such that  
\be\label{small_epsilon_2}
C(C_1) \int_{t_0}^{T^*} \frac{dt}{(T^*-t) \log^{1+\eta} (T^*-t)} < \varepsilon.
\ee
Again, $C(C_1)$ is a fixed constant to be found below. Consider the same cut-off function as in \eqref{chi}. Let $0<t<T^*$ and $\mu(t)$ be a smooth increasing function to be defined later. For $\chi$ as in \eqref{chi}, let
\[
\varphi(t,x):= \chi\left( \frac{x+\frac12(\mu(t)+ \mu(t_1))}{\mu(t)}\right).
\]
Notice that \eqref{derivative_t_chi} and \eqref{derivative_x_chi} holds for $t<T^*$. Also, since $\chi'$ is nonzero on $[-1,0]$, and $\mu'(t)>0$, we have from \eqref{derivative_t_chi} $\partial_t \varphi(t,x)  \leq 0$.
On the other hand, for any $0<t<T^*$ we have the classical Kato identity
\[
\frac{d}{dt} \mathcal J(t)  =   \frac12 \int \partial_t \varphi u^2 -\frac32\int \partial_x \varphi (\partial_x u)^2 +\frac12\int \partial_x^3 \varphi u^2 +\frac{p}{p+1} \int\partial_x \varphi u^{p+1}.
\]
Exactly as in the previous lemma, 
\[
\left| \frac32\int \partial_x \varphi (\partial_x u)^2 \right| \lesssim \frac{1}{\mu(t)} \int (\partial_x u)^2 \lesssim \frac{\| \partial_x u(t)\|_{L^2}^2}{\mu(t)},
\] 
\[
\left| \frac12\int \partial_x^3 \varphi u^2 \right| \lesssim \frac{1}{\mu^3(t)} \int u^2 \lesssim \frac{1}{\mu^3(t)},
\] 
and from \eqref{GN3},
\[
\ba
\left| \frac{p}{p+1}\int \partial_x \varphi u^{p+1} \right| \leq &~{} \frac{2p}{(p+1)\mu(t)} \|u(t)\|_{L^\infty}^{p-1} \int u^2 \\
\lesssim &~{} \frac{1}{\mu(t)} \| \partial_x u(t)\|_{L^2}^{\frac{p-1}2}.
\ea
\] 
Recall that $\eta>0$ is a small number. We choose $\mu(t)$ smooth increasing such that 
\be\label{def_mu}
\mu(t) \geq \frac12C_1\|\partial_x u(t)\|_{L^2}^{\frac{p-1}{2}} (T^*-t) |\log^{1+\eta}(T^*-t)|.
\ee
This choice reflects a competition between the term $\|\partial_x u(t)\|_{L^2}^{\frac{p-1}{2}}$ that diverges to infinity as $t\uparrow T^*$ (at least for a sequence of times), and the term $(T^*-t) |\log^{1+\eta}(T^*-t)|$ that converges to zero as $t\uparrow T^*$. We do not assume any final choice on the long time behavior of $\|\partial_x u(t)\|_{L^2}^{\frac{p-1}{2}} (T^*-t) |\log^{1+\eta}(T^*-t)|$. Therefore, the choice of $\mu(t)$ must be taken with care. First of all,
\[
\frac{1}{\mu(t)} \| \partial_x u(t)\|_{L^2}^{\frac{p-1}2} \lesssim  \frac1{(T^*-t) |\log^{1+\eta}(T^*-t)|}\in L^1([0,T^*)).
\]
Additionally, under \eqref{cond_rara} one has $\| \partial_x u(t)\|_{L^2}^{\frac{p-1}2} \geq \| \partial_x u(t)\|_{L^2}^{2}$ as $t\uparrow T^*$, $p\geq 5$, so that $\frac{1}{\mu(t)} \| \partial_x u(t)\|_{L^2}^{2} \in L^1([0,T^*))$. Finally, from \eqref{def_mu} and \eqref{cond_rara},
\[
\ba
\mu^3(t) \sim &~{}  \|\partial_x u(t)\|_{L^2}^{\frac{3(p-1)}{2}} (T^*-t)^{3} |\log^{3(1+\eta)}(T^*-t)| \\
\geq &~{} (T^*-t) |\log^{1+\eta}(T^*-t)|,
\ea
\]
and then $\frac{1}{\mu^3(t)} \in  L^1([0,T^*))$. We conclude 
\[
\frac{d}{dt} \mathcal J(t)  \lesssim \frac{1}{(T^*-t)\log^{1+\eta} (T^*-t)}.
\]
Integrating in $[t_0,t_1]$, and using \eqref{small_epsilon_2}, we obtain for $C(C_0)$ coming from above
\[
\mathcal J(t_1) \leq \mathcal J(t_0) + \frac12 C(C_0) \int_{t_0}^{t_1}\frac{1}{(T^*-t)\log^{1+\eta} (T^*-t)} < \mathcal J(t_0) + \frac 12\varepsilon.
\]
We first conclude that 
\[
\int u^2(t_1,x)\chi\left( \frac{x+\mu(t_1)}{\mu(t_1)}\right)dx \leq  \int u^2(t_0,x)\chi\left( \frac{x+\frac12(\mu(t_0) + \mu(t_1))}{\mu(t_0)}\right) dx +\varepsilon,
\]
and noticing that in the region $\frac{x+\mu(t_1)}{\mu(t_1)} \leq -1$ one has $\chi\equiv 1$,
\[
\int_{x<-2\mu(t_1)} u^2(t_1,x) dx \leq \int u^2(t_0,x)\chi\left( \frac{x+\frac12(\mu(t_0) + \mu(t_1))}{\mu(t_0)}\right) dx +\varepsilon.
\]
Now we have two cases, where $\mu(t)$ is unbounded, and where $\mu(t)$ remains bounded, In the first case, sending $t_1$ to infinity, we get
\[
\ba
& \limsup_{t_1\to +\infty}\int_{x<-2\mu(t_1)} u^2(t_1,x) dx \\
&\quad \leq \lim_{t_1\to +\infty}\int u^2(t_0,x)\chi\left( \frac{x+\frac12(\mu(t_0) + \mu(t_1))}{\mu(t_0)}\right) dx +\varepsilon =\varepsilon.
\ea
\]
Since $\varepsilon>0$ is arbitrary, we conclude that 
\[
\lim_{t\to +\infty}\int_{x<- \mu(t)} u^2(t,x) dx =0.
\]
This ends the proof of \eqref{left22} after choosing $\beta_2(t):=\mu(t)$. In the second case, we proceed as in \eqref{cota_externa} by making $C(C_0)$ large enough we can get a similar decay to zero.
\end{proof}


\section{Behavior on compact regions}\label{4}

In this section we prove \eqref{MT21}, \eqref{MT01bis} and \eqref{MT22}. Recall that $p=2n$, $n\geq 3$. The proof is divided in two cases.

\subsection{Case $T^*$ finite} Assume $t<T^*$. Let $s:= T^*-t>0$. Then the limit $t\uparrow T^*$ is equivalent to $s\downarrow 0$. With some abuse of notation, we denote $u(s):=u(T^*-s)$, taking care of the moment where one takes time derivative.  Let
\be\label{varphi3}
\varphi(s,x):= \frac1{\theta(s)} \tanh\left( \frac{x-\mu(s)}{\lambda_1(s)}\right)\sech^2\left( \frac{x-\mu(s)}{\lambda_2(s)}\right),
\ee
where $\beta(s) \geq \| \partial_x u(s) \|_{L^2}^{n-1}$ is a smooth, decreasing function such that $\lim_{s\downarrow 0}\beta(s) =+\infty$, and
\be\label{conds}
\ba
\theta(s)= &~{} (s\beta)^{\frac13}(s)\log^2 s \\
\lambda_2(s)= &~{} \frac{\theta^2 (s)}{\log^4 s}  =(s\beta)^{\frac23}(s),\\
\lambda_1(s)= &~{} \frac{(s\beta)^{\frac23}(s)}{\log^2s}  \ll \lambda_2(s),
\ea
\ee
and
\be\label{varphi3bis}
 \mu(s) \hbox{ such that }  |\mu'(s)|\lesssim \frac{\beta^{\frac23}}{s^{\frac13} \log s}.
\ee
From now on, we assume without loss of generality that $0<s<S^*:=\min\{\frac12 ,T^*\}$. We will measure integrability in the interval $(0,S^*]$. First of all, notice that 
\be\label{derm1}
\left| \int \varphi u\right| \lesssim  \frac{\lambda_2^{\frac12}(s)}{\theta(s)} \| u \|_{L^\infty(0,T^*; L^2(\mathbb R))} \lesssim \frac{\lambda_2^{\frac12}(s)}{\theta(s)} \lesssim \frac1{\log^2s}.
\ee
Notice that $\varphi$ in \eqref{varphi3} is smooth, localized and bounded in space, for each fixed $0<s<S^*$. Now we have by classical integration by parts for $0<s<S^*$, namely far from the blow up time,
\be\label{der0}
-\frac{d}{ds} \int \varphi (s)u = \frac{d}{dt} \int \varphi (T^*-t) u = -\int \partial_s \varphi u +\int \partial_x^3 \varphi u +\int \partial_x\varphi u^{2n}.
\ee
Now we recall that
\be\label{der1}
\ba
\partial_s \varphi = &~{} -\frac{\theta'(s)}{\theta^2(s)} \tanh\left( \frac{x-\mu(s)}{\lambda_1(s)}\right)\sech^2\left( \frac{x-\mu(s)}{\lambda_2(s)}\right) \\
&~{} - \frac{\lambda_1'(s)}{\theta(s)\lambda_1(s)} \left( \frac{x-\mu(s)}{\lambda_1(s)}\right) \sech^2\left( \frac{x-\mu(s)}{\lambda_1(s)}\right)\sech^2\left( \frac{x-\mu(s)}{\lambda_2(s)}\right)\\
&~{} -\frac{\mu'(s)}{\theta(s)\lambda_1(s)} \sech^2\left( \frac{x-\mu(s)}{\lambda_1(s)}\right)\sech^2\left( \frac{x-\mu(s)}{\lambda_2(s)}\right)\\
&~{} - \frac{\lambda_2'(s)}{\theta(s)\lambda_2(s)} \tanh\left( \frac{x-\mu(s)}{\lambda_1(s)}\right)\left( \frac{x-\mu(s)}{\lambda_2(s)}\right) (\sech^2)'\left( \frac{x-\mu(s)}{\lambda_2(s)}\right)\\
&~{} -\frac{\mu'(s)}{\theta(s) \lambda_2(s)} \tanh\left( \frac{x-\mu(s)}{\lambda_1(s)}\right)(\sech^2)'\left( \frac{x-\mu(s)}{\lambda_2(s)}\right).
\ea
\ee
Additionally, 
\be\label{der2}
\ba
\partial_x \varphi = &~{} \frac1{\theta(s)\lambda_1(s)} \sech^2 \left( \frac{x-\mu(s)}{\lambda_1(s)}\right)\sech^2\left( \frac{x-\mu(s)}{\lambda_2(s)}\right) \\
&~{} +\frac1{\theta(s)\lambda_2(s)} \tanh\left( \frac{x-\mu(s)}{\lambda_1(s)}\right)(\sech^2)'\left( \frac{x-\mu(s)}{\lambda_2(s)}\right),
\ea
\ee
and
\be\label{der3}
\ba
\partial_x^3 \varphi = &~{} \frac1{\theta(s)\lambda_1^3(s)} (\sech^2)''\left( \frac{x-\mu(s)}{\lambda_1(s)}\right)\sech^2\left( \frac{x-\mu(s)}{\lambda_2(s)}\right) \\
&~{} +\frac3{\theta(s)\lambda_1^2(s)\lambda_2(s)} (\sech^2)'\left( \frac{x-\mu(s)}{\lambda_1(s)}\right)(\sech^2)'\left( \frac{x-\mu(s)}{\lambda_2(s)}\right)\\
&~{} +\frac3{\theta(s)\lambda_1(s)\lambda_2^2(s)} \sech^2\left( \frac{x-\mu(s)}{\lambda_1(s)}\right)(\sech^2)''\left( \frac{x-\mu(s)}{\lambda_2(s)}\right)\\
&~{} +\frac1{\theta(s)\lambda_2^3(s)} \tanh\left( \frac{x-\mu(s)}{\lambda_1(s)}\right)(\sech^2)'''\left( \frac{x-\mu(s)}{\lambda_2(s)}\right).
\ea
\ee
Now we compute each term in \eqref{der0}. Using \eqref{der1} and Cauchy-Schwarz, 
\[
\ba
\left| \int \partial_s \varphi u \right| \lesssim &~{} \frac{\lambda_2^{\frac12}(s)}{s\theta(s)} \| u \|_{L^\infty(0,T^*; L^2(\mathbb R))} +\frac{\lambda_1^{\frac12}(s)}{s\theta(s)} \| u \|_{L^\infty(0,T^*; L^2(\mathbb R))} \\
&~{} + \frac{|\mu'(s)|}{\theta(s)\lambda_1^{\frac12}(s)} \| u \|_{L^\infty(0,T^*; L^2(\mathbb R))} +\frac{\lambda_2^{\frac12}(s)}{s\theta(s)} \| u \|_{L^\infty(0,T^*; L^2(\mathbb R))}  \\
&~{} + \frac{|\mu'(s)|}{\theta(s)\lambda_2^{\frac12}(s)} \| u \|_{L^\infty(0,T^*; L^2(\mathbb R))}. 
\ea
\]
From the $L^2$ boundedness of the solution,
\be\label{cota11}
\ba
 \left| \int \partial_s \varphi u \right| \lesssim &~{} \frac1{\theta(s)} \left( \frac{\lambda_2^{\frac12}(s)}{s}  + \frac{|\mu'(s)|}{\lambda_1^{\frac12}(s)}  \right). 
\ea
\ee
Using \eqref{der3}, and the fact that $\lambda_1(s)\ll \lambda_2(s)$ in \eqref{conds},
\be\label{cota12}
\ba
\left| \int  \partial_x^3 \varphi u \right| \lesssim &~{} \frac1{\theta(s)}\left(  \frac1{\lambda_1^{\frac52}(s)}+  \frac1{\lambda_1^{\frac32}(s) \lambda_2(s)} + \frac1{\lambda_1^{\frac12}\lambda_2^{2}(s)} + \frac1{\lambda_2^{\frac52}(s)} \right) \| u \|_{L^\infty(0,T^*; L^2(\mathbb R))}.
\ea
\ee
Here we can check the validity of the choices in \eqref{conds} with a simple example: the quantity $\frac1{\theta(t)\lambda_2^{\frac52}(t)} $ satisfies
\[
\frac1{\theta(t)\lambda_2^{\frac52}(t)}  = \frac1{(s\beta)^2(s) \log^2 s} \leq \frac1{s\log^2 s}, 
\]
provided $\beta(s) \geq \frac1{\sqrt{s}}$. From Remark \ref{rem_clave}, we get $\beta(s) \geq \| \partial_x u(s)\|_{L^2}^{n-1} \gtrsim s^{-\frac{(n-1)(2n+3)}{6(2n-1)}}$. Therefore, the condition $s^{-\frac{(n-1)(2n+3)}{6(2n-1)}} \gg s^{-\frac12}$ is valid if $n> \frac52$, precisely above the $L^2$ critical case $p=2n=5$. A similar situation occurs with the remaining terms. We consider the most dangerous term, coming from the fact that $\lambda_1(s)\ll \lambda_2(s)$. We have
\[
\frac1{\theta(t)\lambda_1^{\frac52}(t)}  = \frac{\log^{\frac52}s}{(s\beta)^2(s)} \ll \frac1{s\log^2 s}, 
\]
since $n\geq3$, see Lemma \ref{BU}. 

From \eqref{der2},  and the fact that $p=2n$,
\be\label{cota13}
\ba
& \int \frac1{\theta(s)\lambda_1(s)} \sech^2 \left( \frac{x-\mu(s)}{\lambda_1(s)}\right)\sech^2\left( \frac{x-\mu(s)}{\lambda_2(s)}\right) u^{2n} \\
& \qquad  \geq \frac1{\theta(s)\lambda_1(s)}   \int_{|x-\mu(s)| \leq \lambda_1(s)}  u^{2n}(t,x)dx.
\ea
\ee
Finally, again using \eqref{der2}, and the fact that $p=2n$,
\[
\ba
 \left| \int   \frac1{\theta(s) \lambda_2(s)} \tanh\left( \frac{x-\mu(s)}{\lambda_1(s)}\right)(\sech^2)'\left( \frac{x-\mu(s)}{\lambda_2(s)}\right) u^{2n} \right|  \lesssim \frac1{\theta(s) \lambda_2(s)} \int u^{2n}. 
\ea
\]
Now we use \eqref{GN1} and $\| u\|_{L^\infty(0,T^*; L^2(\mathbb R))}$ bounded to obtain
\be\label{cota14}
\ba
 &\left| \int   \frac1{\theta(s) \lambda_2(s)} \tanh\left( \frac{x-\mu(s)}{\lambda_1(s)}\right)(\sech^2)'\left( \frac{x-\mu(s)}{\lambda_2(s)}\right) u^{2n} \right| \lesssim  \frac{\| \partial_x u(s) \|_{L^2}^{n-1}}{\theta(s) \lambda_2(s)}. 
\ea
\ee
Gathering \eqref{cota11}, \eqref{cota12}, \eqref{cota13} and \eqref{cota14}, we obtain for some $C,c_0>0$,
\[
\ba
-\frac{d}{ds} \int \varphi u \geq  &~{} \frac{c_0}{\theta(s)\lambda_1(s)}   \int_{|x-\mu(s)| \leq \lambda_1(s)}  u^{2n}(t,x)dx \\
&~{} -\frac{C}{\theta(s)} \left( \frac{\lambda_2^{\frac12}(s)}{s}  + \frac{|\mu'(s)|}{\lambda_1^{\frac12}(s)}  \right) - \frac{C}{\theta(s)\lambda_1^{\frac52}(s)} -\frac{C\beta(s)}{\theta(s) \lambda_2(s)}.
\ea
\]
Now we use \eqref{conds} to get
\[
\frac{1}{\theta(s)\lambda_1(s)}  =\frac{1}{s\beta(s)}, 
\]
\[
\frac{\lambda_2^{\frac12}(s)}{\theta(s)s} = \frac1{s\log^2s},
\]
\[
\frac{|\mu'(s)|}{\theta(s) \lambda_1^{\frac12}(s)} \lesssim \frac{\beta^{\frac23} \log s}{ s^{\frac13} \log s (s\beta)^{\frac13}(s)\log^2 s (s\beta)^{\frac13}(s)} = \frac1{s\log^2s},
\]
and
\[
\frac{\beta(s)}{\theta(s) \lambda_2(s)} = \frac1{s\log^2s}.
\]
Therefore, 
\[
\ba
-\frac{d}{ds} \int \varphi u \geq  &~{} \frac{c_0}{s\beta(s)}   \int_{|x-\mu(s)| \leq \lambda_1(s)}  u^{2n}(t,x)dx  -\frac{C}{s\log^2 s} .
\ea
\]
Integrating in time and using \eqref{derm1}, we conclude \eqref{MT21} with $\lambda_3:=\lambda_1$, $\beta_3=\beta$ and $\mu_3:=\mu$.

\subsection{Case $T^*$ infinite}   Here we have two subcases:
\begin{itemize}
\item $\sup_{t\geq 0} \| \partial_x u(t) \|_{L^2} <+\infty$;
\item $\limsup_{t\to+\infty} \| \partial_x u(t) \|_{L^2} = +\infty$.
\end{itemize}
The first case is very similar to the one proved in \cite[Theorem 1.9]{MMPP1} and we left the proof to the reader. This proves \eqref{MT01bis}. The second case recognizes the fact that infinite time blow up may occur. We assume no increasing behavior of the $L^2$ norm of the gradient, only $\limsup_{t\to+\infty} \| \partial_x u(t) \|_{L^2} = +\infty$.  Now we concentrate on the second case. From now on, we assume $t\gg 1$.  As in the previous subsection, define 
\be\label{varphi3a}
\varphi(t,x):= \frac1{\theta(t)} \tanh\left( \frac{x-\mu(t)}{\lambda_1(t)}\right)\sech^2\left( \frac{x-\mu(t)}{\lambda_2(t)}\right),
\ee
where $\beta(t) \geq \| \partial_x u(t) \|_{L^2}^{n-1}$ is a smooth, increasing function such that $\lim_{t\to +\infty}\beta(t) =+\infty$, and
\be\label{condsa}
\ba
&\theta(t)=(t\beta)^{\frac13}(t)\log^2 t \\
&\lambda_2(t)= \frac{\theta^2 (t)}{\log^4 t}  = (t\beta)^{\frac23}(t),\\
&\lambda_1(t)= \frac{(t\beta)^{\frac23}(t)}{\log t}   \ll \lambda_2(t),\\
& \mu(t) \hbox{ be such that } |\mu'(t)| \lesssim \frac{(t\beta)^{\frac23}(t)}{t}.
\ea
\ee
Notice that $\varphi$ in \eqref{varphi3} is smooth, localized and bounded in space, for each time $t$ large.  First of all, notice that from \eqref{varphi3a}, \eqref{condsa} and the conservation of mass we have
\be\label{derm1a}
\left| \int \varphi u\right| \lesssim  \frac{\lambda_2^{\frac12}(t)}{\theta(t)} \| u \|_{L^\infty(0,+\infty; L^2(\mathbb R))} \lesssim \frac{\lambda_2^{\frac12}(t)}{\theta(t)} \lesssim \frac1{\log^2t}.
\ee
Therefore, $\int \varphi u$ is well-defined and bounded in time. Now we have
\be\label{der0a}
 \frac{d}{dt} \int \varphi u = \int \partial_t \varphi u +\int \partial_x^3 \varphi u +\int \partial_x\varphi u^{2n}.
\ee
As in the previous case, 
\be\label{der1a}
\ba
\partial_t \varphi = &~{} -\frac{\theta'(t)}{\theta^2(t)} \tanh\left( \frac{x-\mu(t)}{\lambda_1(t)}\right)\sech^2\left( \frac{x-\mu(t)}{\lambda_2(t)}\right) \\
&~{} - \frac{\lambda_1'(t)}{\theta(t)\lambda_1(t)} \left( \frac{x-\mu(t)}{\lambda_1(t)}\right) \sech^2\left( \frac{x-\mu(t)}{\lambda_1(t)}\right)\sech^2\left( \frac{x-\mu(t)}{\lambda_2(t)}\right)\\
&~{} -\frac{\mu'(t)}{\theta(t)\lambda_1(t)} \sech^2\left( \frac{x-\mu(t)}{\lambda_1(t)}\right)\sech^2\left( \frac{x-\mu(t)}{\lambda_2(t)}\right)\\
&~{} - \frac{\lambda_2'(t)}{\theta(t)\lambda_2(t)} \tanh\left( \frac{x-\mu(t)}{\lambda_1(t)}\right)\left( \frac{x-\mu(t)}{\lambda_2(t)}\right) (\sech^2)'\left( \frac{x-\mu(t)}{\lambda_2(t)}\right)\\
&~{} -\frac{\mu'(t)}{\theta(t) \lambda_2(t)} \tanh\left( \frac{x-\mu(t)}{\lambda_1(t)}\right)(\sech^2)'\left( \frac{x-\mu(t)}{\lambda_2(t)}\right).
\ea
\ee
Additionally, 
\be\label{der2a}
\ba
\partial_x \varphi = &~{} \frac1{\theta(t)\lambda_1(t)} \sech^2 \left( \frac{x-\mu(t)}{\lambda_1(t)}\right)\sech^2\left( \frac{x-\mu(t)}{\lambda_2(t)}\right) \\
&~{} +\frac1{\theta(t)\lambda_2(t)} \tanh\left( \frac{x-\mu(t)}{\lambda_1(t)}\right)(\sech^2)'\left( \frac{x-\mu(t)}{\lambda_2(t)}\right),
\ea
\ee
and
\be\label{der3a}
\ba
\partial_x^3 \varphi = &~{} \frac1{\theta(t)\lambda_1^3(t)} (\sech^2)''\left( \frac{x-\mu(t)}{\lambda_1(t)}\right)\sech^2\left( \frac{x-\mu(t)}{\lambda_2(t)}\right) \\
&~{} +\frac3{\theta(t)\lambda_1^2(t)\lambda_2(t)} (\sech^2)'\left( \frac{x-\mu(t)}{\lambda_1(t)}\right)(\sech^2)'\left( \frac{x-\mu(t)}{\lambda_2(t)}\right)\\
&~{} +\frac3{\theta(t)\lambda_1(t)\lambda_2^2(t)} \sech^2\left( \frac{x-\mu(t)}{\lambda_1(t)}\right)(\sech^2)''\left( \frac{x-\mu(t)}{\lambda_2(t)}\right)\\
&~{} +\frac1{\theta(t)\lambda_2^3(t)} \tanh\left( \frac{x-\mu(t)}{\lambda_1(t)}\right)(\sech^2)'''\left( \frac{x-\mu(t)}{\lambda_2(t)}\right).
\ea
\ee
Now we compute each term in \eqref{der0a}. Using \eqref{der1a} and Cauchy-Schwarz,
\[
\ba
\left| \int \partial_t \varphi u \right| \lesssim &~{} \frac{\lambda_2^{\frac12}(t)}{t \theta(t)} \| u \|_{L^\infty(0,+\infty; L^2(\mathbb R))} +\frac{\lambda_1^{\frac12}(t)}{t \theta(t)} \| u \|_{L^\infty(0,+\infty; L^2(\mathbb R))} \\
&~{} + \frac{|\mu'(t)|}{\theta(t)\lambda_1^{\frac12}(t)} \| u \|_{L^\infty(0,+\infty; L^2(\mathbb R))} +\frac{\lambda_2^{\frac12}(t)}{t \theta(t)} \| u \|_{L^\infty(0,+\infty; L^2(\mathbb R))}  \\
&~{} + \frac{|\mu'(t)|}{\theta(t)\lambda_2^{\frac12}(t)} \| u \|_{L^\infty(0,+\infty; L^2(\mathbb R))}. 
\ea
\]
Therefore, from the boundedness of the $L^2$ norm and the fact that $\lambda_2(t)\gg \lambda_1(t)$,
\be\label{cota11a}
\ba
\left| \int \partial_t \varphi u \right| \lesssim &~{} \frac1{\theta(t)} \left( \frac{\lambda_2^{\frac12}(t)}{t}  + \frac{|\mu'(t)|}{\lambda_1^{\frac12}(t)}  \right). 
\ea
\ee
Using \eqref{der3a} and the previous arguments,
\be\label{cota12a}
\ba
\left| \int  \partial_x^3 \varphi u \right| \lesssim &~{} \frac1{\theta(t)}\left(  \frac1{\lambda_1^{\frac52}(t)}+  \frac1{\lambda_1^{\frac32}(t) \lambda_2(t)} + \frac1{\lambda_1^{\frac12}\lambda_2^{2}(t)} + \frac1{\lambda_2^{\frac52}(t)} \right) \| u \|_{L^\infty(0,+\infty; L^2(\mathbb R))}\\
\lesssim &~{} \frac1{\theta(t)\lambda_1^{\frac52}(t)}.
\ea
\ee
From \eqref{der2a},  and the fact that $p=2n$,
\be\label{cota13a}
\ba
& \int \frac1{\theta(t)\lambda_1(t)} \sech^2 \left( \frac{x-\mu(t)}{\lambda_1(t)}\right)\sech^2\left( \frac{x-\mu(t)}{\lambda_2(t)}\right) u^{2n} \\
& \qquad  \geq \frac1{\theta(t)\lambda_1(t)}   \int_{|x-\mu(t)| \leq \lambda_1(t)}  u^{2n}(t,x)dx.
\ea
\ee
Finally, again using \eqref{der2a}, and the fact that $p=2n$,
\[
\ba
 \left| \int   \frac1{\theta(t) \lambda_2(t)} \tanh\left( \frac{x-\mu(t)}{\lambda_1(t)}\right)(\sech^2)'\left( \frac{x-\mu(t)}{\lambda_2(t)}\right) u^{2n} \right|  \lesssim \frac1{\theta(t) \lambda_2(t)} \int u^{2n}. 
\ea
\]
Now we use \eqref{GN1} with $q=2n$ and $\| u\|_{L^\infty(0,+\infty; L^2(\mathbb R))}$ bounded to obtain
\be\label{cota14a}
\ba
 &\left| \int   \frac1{\theta(t) \lambda_2(t)} \tanh\left( \frac{x-\mu(t)}{\lambda_1(t)}\right)(\sech^2)'\left( \frac{x-\mu(t)}{\lambda_2(t)}\right) u^{2n} \right|  \lesssim \frac{\| \partial_x u(t) \|_{L^2}^{n-1}}{\theta(t) \lambda_2(t)}. 
\ea
\ee
Gathering \eqref{cota11a}, \eqref{cota12a}, \eqref{cota13a} and \eqref{cota14a}, we obtain for some $C,c_0>0$,
\[
\ba
\frac{d}{dt} \int \varphi u \geq  &~{} \frac{c_0}{\theta(t)\lambda_1(t)}   \int_{|x-\mu(t)| \leq \lambda_1(t)}  u^{2n}(t,x)dx \\
&~{} -\frac{C}{\theta(t)} \left( \frac{\lambda_2^{\frac12}(t)}{t}  + \frac{|\mu'(t)|}{\lambda_1^{\frac12}(t)}  \right) - \frac{C}{\theta(t)\lambda_1^{\frac52}(t)} -\frac{C\beta(t)}{\theta(t) \lambda_2(t)}.
\ea
\]
Now we use \eqref{condsa} to get
\[
\frac{1}{\theta(t)\lambda_1^{\frac52}(t)} = \frac{1}{\frac{(t\beta)^{\frac53}(t)}{\log^{\frac52} t}(t\beta)^{\frac13}(t)\log^2 t} = \frac{\log^{\frac12}t}{t^2\beta^2(t)} \ll \frac1{t\log^2t},
\]
\[
\frac{|\mu'(t)|}{\theta(t)\lambda_1^{\frac12}(t)} \lesssim \frac{\beta(t) \log^{\frac12} t}{(t\beta)^{\frac13}(t) (t\beta)^{\frac13}(t) (t\beta)^{\frac13}(t) \log^2 t} =  \frac{1}{t \log^{\frac32}t}, 
\]
and finally
\[
\frac{\beta(t)}{\theta(t) \lambda_2(t)} =\frac1{t\log^2t}.
\]
We conclude that
\[
\ba
\frac{d}{dt} \int \varphi u \geq  &~{} \frac{c_0}{t\beta (t) \log t}   \int_{|x-\mu(t)| \leq \lambda_1(t)}  u^{2n}(t,x)dx  -\frac{C}{t\log^{\frac32} t} .
\ea
\]
Integrating in time and using \eqref{derm1a}, we conclude \eqref{MT21} in the case $T^* = +\infty$ with $\beta_3:=\beta$, $\lambda_3:=\lambda_1$ and $\mu_3:= \mu$.

\subsection{Final remark} We conclude this paper by mentioning that the approach followed in this paper can be translated to other models where one has a nontrivial positive nonlinearity. The existence of at least one uniformly bounded norm is crucial, in this case, the conservation of the $L^2$ norm is key to conclude the arguments. We expect to investigate consequences of this fact in a forthcoming work.

\end{document}